\documentclass[10pt]{article}
\setlength{\topmargin}{-10pt}
\setlength{\oddsidemargin}{25pt}
\setlength{\evensidemargin}{25pt}
\setlength{\textwidth}{145mm}
\setlength{\textheight}{215mm}
\usepackage{amsmath,amsthm,amssymb,amsfonts}
\usepackage{enumerate}
\usepackage{mathrsfs}
\usepackage[cmtip,all]{xy}
\numberwithin{equation}{section}
\numberwithin{figure}{section}

\normalsize

\newtheoremstyle{theoremstyle}
  {10pt}      
  {5pt}       
  {\itshape}  
  {}          
  {\bfseries} 
  {:}         
  {.5em}      
  {}          

\newtheoremstyle{examplestyle}
  {10pt}      
  {5pt}       
  {}          
  {}          
  {\bfseries} 
  {:}         
  {.5em}      
  {}          

\newtheorem{thm}{Theorem}[section]
\newtheorem{prop}[thm]{Proposition}
\newtheorem{lem}[thm]{Lemma}
\newtheorem{cor}[thm]{Corollary}

\theoremstyle{definition}

\newtheorem{defn}[thm]{Definition}
\newtheorem{remk}[thm]{Remark}
\newtheorem{remks}[thm]{Remarks}

\newtheorem{exm}[thm]{Example}
\newtheorem{exms}[thm]{Examples}

\numberwithin{equation}{section}

{\hfill$\square$\end{defn}}

{\hfill$\square$\end{remk}}

{\hfill$\square$\end{remks}}

{\hfill$\square$\end{exm}}

{\hfill$\square$\end{exms}}

\newcommand{\thmref}{Theorem~\ref}
\newcommand{\propref}{Proposition~\ref}

\newcommand{\sC}{{\mathcal C}}
\newcommand{\A}{{\mathbb A}}
\newcommand{\C}{{\mathbb C}}
\newcommand{\G}{{\mathbb G}}
\renewcommand{\P}{{\mathbb P}}
\newcommand{\Q}{{\mathbb Q}}
\newcommand{\Z}{{\mathbb Z}}
\renewcommand{\1}{{\mathbf{1}}}
\newcommand{\MZ}{{\mathbf{MZ}}}

\newcommand{\CH}{{\rm CH}}
\newcommand{\surj}{\twoheadrightarrow}
\newcommand{\inj}{\hookrightarrow}
\newcommand{\Pic}{{\rm Pic}}
\newcommand{\Hom}{{\rm Hom}}
\newcommand{\Map}{{\rm Map}}
\newcommand{\Spec}{{\rm Spec \,}}
\newcommand{\sHom}{{\mathcal{H}{om}}}
\newcommand{\holim}{\mathop{{\rm holim}}}
\newcommand{\Spt}{{\mathbf{Spt}}}
\newcommand{\Spc}{{\mathbf{Spc}}}
\newcommand{\Sm}{{\mathbf{Sm}}}
\newcommand{\SH}{{\mathbf{SH}}}

\renewcommand{\H}{{\mathbf{H}}}
\newcommand{\colim}{\mathop{\text{colim}}}
\newcommand{\wh}{\widehat}

\title{{\bf $\A^1$-contractibility of Koras--Russell threefolds}}
\author{Marc Hoyois, Amalendu Krishna, Paul Arne {\O}stv{\ae}r}
\date{}
\begin{document}
\maketitle
\begin{abstract}
Finite suspensions of Koras--Russell threefolds are shown to be contractible in $\A^1$-homotopy theory. 
\end{abstract}

\section{Introduction}\label{sec:Intro}
\indent
The cancellation problem in affine algebraic geometry 
asks whether the affine space $\A^{n}_{\C}$ is cancellative, i.e.,  
is any smooth complex affine variety $X$ with the property 
$X\times\A^1_{\C}\simeq\A^{n+1}_{\C}$ isomorphic to $\A^{n}_{\C}$? 
This problem is often referred to 
as the {\sl Zariski cancellation problem},
based on its birational version which was raised by Zariski in 1949
(see \cite[\S5]{Na}).

This has been settled affirmatively for curves by Abhyankar, Eakin, and Heinzer \cite{AEH},  
and for surfaces by Fujita \cite{Fujita}, Miyanishi and Sugie \cite{MS}. 
Koras--Russell threefolds are smooth complex contractible affine threefolds with a hyperbolic action of a one-dimensional torus with a unique fixed point, 
such that the quotients of the threefold and the tangent space of the fixed point by this action are isomorphic \cite{KR1}
(see \S\ref{sect:EKT} for a precise definition).
A vivid example is the Russell cubic hypersurface given by the equation $x+x^2y+z^2+t^3=0$ in $\A^4_{\C}=\Spec(\C[x,y,z,t])$ equipped with the $\C^{\times}$-action 
$\lambda\cdot (x,y,z,t)=(\lambda^6 x,\lambda^{-6}y,\lambda^3 z,\lambda^2 t)$. 

The Koras--Russell threefolds enjoy remarkable geometric properties exploited in the solution \cite{KKMLR} of the linearization problem for $\C^{\times}$-actions,
and have for long been believed to be counterexamples to the cancellation problem.
Gupta \cite{Gupta} showed that Asanuma threefolds \cite{Asanuma} give 
counterexamples to the cancellation problem for algebraically 
closed fields of positive characteristics. However, Gupta's 
counterexamples do not include the Koras--Russell threefolds.

In \cite{Limanov}, 
Makar-Limanov used his eponymous invariant to distinguish $\A^3_{\C}$ from the Russell cubic.
Dubouloz \cite{Dubouloz}, on the other hand, showed that the same invariant is unable to distinguish between $X\times\A^1_{\C}$ and $\A^{4}_{\C}$.
The closely related Makar-Limanov and Derksen invariants (see, e.g., \cite[\S7]{Zaidenberg}) are the only known tools to distinguish smooth complex 
contractible affine varieties of dimension $\geq 3$ from affine spaces, 
and such varieties are topologically indistinguishable from affine spaces because their underlying analytic spaces are diffeomorphic to affine spaces, 
as shown independently by Dimca and Ramanujam (see \cite[\S3]{Zaidenberg}).
We refer the reader to \cite{Kraft}, \cite{Kraft-1} for further background and history on
the cancellation problem.

An ambitious program for showing that $X$ is not $\A^1$-contractible and hence 
not stably isomorphic to an affine space was launched by Asok \cite{Asok}.
He transformed the problem into the realm of $\A^1$-homotopy theory \cite{MV} by asking for $\A^1$-homotopic obstructions to such a stable isomorphism. 
One motivation for Asok's program comes from $G$-equivariant homotopy theory, 
for $G$ a compact group, 
where equivariant topological $K$-theory is representable.
The corresponding equivariant algebro-geometric results have just recently been established in a joint work with Heller \cite{HKO} 
(see also the antecedent work of Deligne and Voevodsky \cite{Deligne}).

Asok's program for showing that $X$ is not stably isomorphic to $\A^4_{\C}$ has received attention from several workers in the field.
It consists of the following steps:
\begin{enumerate}
\item[(i)] 
The roots of unity $\mu_p \subsetneq \C^{\times}$ acts on $X$ with fixed points $X^{\mu_p} = X^{\C^{\times}}$ for all but finitely many primes $p$.
\item[(ii)]
If $f: X \to Y$ is a $\mu_p$-equivariant map of smooth complex varieties with $\mu_p$-action such that $X \to Y$ as well as $X^{\mu_p} \to Y^{\mu_p}$ are 
$\A^1$-weak equivalences (in the sense of \cite{MV}), 
then $f$ induces an isomorphism of $\mu_p$-equivariant Grothendieck groups.
\item[(iii)]
The group $K_0^{\mu_p}(X)$ is nontrivial for all but finitely many primes $p$, i.e., $K_0^{\mu_p}(\Spec\C)\to K_0^{\mu_p}(X)$ is not an isomorphism. 
\end{enumerate}
The third step was motivated by Bell's computation in \cite{Bell} which shows that $K_0^{\C^\times}(X)$ is nontrivial.
It follows from (i)-(iii) that $X$ is not $\A^1$-contractible and hence not stably isomorphic to $\A^3_{\C}$ (see \S\ref{sect:EKT}).  
This would be the first example of a smooth affine $\A^1$-connected variety that is topologically contractible, 
but not $\A^1$-contractible.
In fact, the only known $n$-dimensional complex affine 
$\A^1$-contractible variety is $\A^n_{\C}$ itself.
The results of this paper are motivated by the quest of verifying all the steps of Asok's program and its application to the cancellation problem.

In \propref{prop:fixed-point}, we use representation theory to show that (i) holds. 
Step (ii) is known to be false integrally \cite{Herrmann}.
However, 
one of the main results of \cite{HKO} shows representability of rationalized equivariant $K$-theory in the fixed point Nisnevich topology. 
It follows that step (ii) holds rationally.
Asok's program will therefore go through provided (iii) holds rationally.

In Theorem \ref{thm:KR-trivial} we show, 
using elementary arguments, 
that the $\mu_p$-equivariant Grothendieck group of a Koras--Russell threefold is trivial for almost all primes $p$.
Thus (iii) cannot be true even with integral coefficients, and the above program for showing that $X$ is not $\A^1$-contractible, 
and hence not stably isomorphic to an affine space, 
cannot succeed.

The failure of (iii) strengthens the potential of Koras--Russell threefolds as counterexamples to the cancellation problem. 
From a $K$-theoretic viewpoint, 
our computation provides strong evidence for $X$ being $\A^1$-contractible.
In \thmref{thm:Main-KR}, 
we show that the Russell cubic, 
and more generally a large class of Koras--Russell threefolds,
becomes $\A^1$-contractible after some finite suspension with the pointed projective line $(\C\P^1,\infty)$. 
This implies in particular that these threefolds have trivial Borel equivariant cohomology.
We generalize work of Murthy \cite[Corollary~3.8]{Murthy} by showing that all vector bundles on such threefolds are trivial. 
Koras and Russell posed the question of triviality of such vector bundles in \cite[\S17.2]{KR1}. 
In the case of the Russell cubic $X$, 
\thmref{thm:MC-Vanish} states that for any smooth complex affine variety $Y$, 
the projection map $X \times Y \to Y$ induces an isomorphism of (bigraded) integral motivic cohomology rings 
\begin{equation*}
H^{*, *}(Y ,\Z)\overset{\simeq}{\to}H^{*, *}(X \times Y,\Z).
\end{equation*}
This is a very special property of $X$, 
making it look like an affine space in the eyes of motivic cohomology.
By combining the above with the slice filtration technology and results of Levine \cite{Levine} and Voevodsky \cite{VoevSlice}, we deduce that the 
motivic suspension spectrum of $X$ is $\A^1$-contractible.
We note that $X$ is the first example of a smooth complex affine variety which is not isomorphic to an affine space in spite of the fact that it is stably $\A^1$-contractible.

\section{Equivariant $K_{0}$ of Koras--Russell threefolds}
\label{sect:EKT}

We begin by briefly explaining how Asok's program (i)-(iii) in the introduction implies that a Koras--Russell threefold is not $\A^1$-contractible, 
and hence not stably isomorphic to an affine space. Since $X$ has a unique $\C^\times$-fixed point, (i) and (iii) imply that there exists a prime $p$ such that
$X^{\mu_p}$ is $\A^1$-contractible (being just a point) and $K^{\mu_p}_0(X)$ is nontrivial. If $X$ itself were $\A^1$-contractible, step (ii) would thus contradict the nontriviality of $K^{\mu_p}_0(X)$. 

It follows that $X$ cannot be stably isomorphic to an affine space by the implication (where the symbol $\sim$ denotes $\A^1$-homotopy equivalence) 
\[
X \times \A^m_{\C}
\simeq 
\A^{m+3} _{\C}
\Rightarrow 
X \sim X \times \A^m_{\C} 
\sim
\A^{m+3}_{\C} 
\sim 
\Spec\C.
\]

Next we verify step (i). 
See \cite[Lemma~1.9]{Brion-Lum} and \cite[Chapter~2.5, Theorem~2]{Kraft} for proofs of the following result.
\begin{lem}\label{lem:embedding}
Let $G$ be a linear algebraic group acting on a complex affine variety $X$.
Then there exists a rational representation $V$ of $G$ and a $G$-equivariant closed embedding $X \inj \Spec({\rm Sym}(V^*))$.
\end{lem}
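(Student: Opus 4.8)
The plan is to reduce the geometric statement to an algebraic one about coordinate rings, the essential input being that the induced action on regular functions is locally finite and rational. Write $A = \mathcal{O}(X)$ for the coordinate ring, a finitely generated $\C$-algebra, and observe that the $G$-action on $X$ makes $A$ a $G$-module by $\C$-algebra automorphisms.

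First I would show that this $G$-module is \emph{rational and locally finite}: every $f \in A$ lies in a finite-dimensional $G$-stable subspace on which $G$ acts algebraically. To this end, consider the comorphism $\sigma^* : A \to \mathcal{O}(G) \otimes_\C A$ of the action morphism $\sigma : G \times X \to X$, and write $\sigma^*(f) = \sum_i u_i \otimes f_i$ with $u_i \in \mathcal{O}(G)$ and $f_i \in A$. Evaluating shows that each translate $g \cdot f$ is a scalar combination $\sum_i u_i(g^{\pm 1}) f_i$ of the finitely many $f_i$, so the $G$-orbit of $f$ spans a finite-dimensional subspace, and the coaction restricts to it. This local finiteness is the technical heart of the argument, and it is the step I expect to require the most care.

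Next, since $A$ is finitely generated as a $\C$-algebra, I would choose algebra generators $a_1, \ldots, a_n$. By the previous step each $a_j$ lies in a finite-dimensional $G$-submodule, and I would take $W \subseteq A$ to be the sum of these submodules. Then $W$ is a finite-dimensional rational $G$-representation containing the $a_j$, hence generating $A$ as a $\C$-algebra. The $G$-equivariant inclusion $W \inj A$ therefore extends to a surjective homomorphism of $G$-algebras ${\rm Sym}(W) \surj A$.

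Finally I would dualize. Setting $V = W^*$, finite-dimensionality yields $V^* \cong W$ and ${\rm Sym}(V^*) \cong {\rm Sym}(W)$, so that $\Spec({\rm Sym}(V^*))$ is the affine space $V$ with its linear $G$-action. The surjection ${\rm Sym}(W) \surj A$ of finitely generated $\C$-algebras corresponds to a closed immersion $X = \Spec(A) \inj \Spec({\rm Sym}(V^*))$, which is $G$-equivariant precisely because the algebra surjection is. This produces the desired $G$-equivariant closed embedding.
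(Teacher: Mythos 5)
Your proof is correct; it is the standard argument via local finiteness of the $G$-action on $\mathcal{O}(X)$, a finite-dimensional rational $G$-stable subspace containing algebra generators, and the induced equivariant surjection from a symmetric algebra. The paper itself gives no proof but defers to \cite{Brion-Lum} and \cite{Kraft}, where essentially this same argument appears, so your write-up simply supplies the omitted details.
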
 

\begin{prop}\label{prop:fixed-point} 
Let $\C^{\times}$ act on a complex affine variety $X$. 
Then there exists a finite set of positive integers $S$ such that  $X^{\mu_n} = X^{\C^{\times}}$ whenever $n>0$ is an integer relatively prime to all elements of $S$.
\end{prop}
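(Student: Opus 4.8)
The plan is to reduce to a linear representation, where both fixed loci can be read off directly from the weights. First note the trivial half of the desired equality: since $\mu_n \subseteq \C^{\times}$, every $\C^{\times}$-fixed point is automatically $\mu_n$-fixed, so $X^{\C^{\times}} \subseteq X^{\mu_n}$ for every $n$. The content is therefore the reverse inclusion for $n$ avoiding finitely many ``bad'' values. To get at it, I would apply \lemref{lem:embedding} with $G = \C^{\times}$ to obtain a rational representation $V$ and a $\C^{\times}$-equivariant closed embedding $X \inj W$, where $W = \Spec({\rm Sym}(V^*))$ is the affine space whose closed points are $V$. For any subgroup $H \subseteq \C^{\times}$, equivariance and closedness of the embedding give $X^H = X \cap W^H$, so it suffices to prove the analogous statement for the linear $\C^{\times}$-action on $W$.

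Next I would use that every rational representation of $\C^{\times} = \G_m$ splits into characters: write $V = \bigoplus_i V_{k_i}$ with $\lambda \in \C^{\times}$ acting on the one-dimensional weight space $V_k$ by $\lambda^k$. Reading off fixed points, $W^{\C^{\times}}$ is the span of the weight-zero coordinates, i.e. $\bigoplus_{k_i = 0} V_{k_i}$, while for $\zeta \in \mu_n$ the action on $V_k$ is multiplication by $\zeta^k$, so $W^{\mu_n} = \bigoplus_{n \mid k_i} V_{k_i}$. Hence $W^{\mu_n} = W^{\C^{\times}}$ holds precisely when no nonzero weight of $V$ is divisible by $n$. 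This dictates the choice $S := \{\, |k_i| : k_i \neq 0 \,\}$, the finite set of nonzero weights occurring in $V$: if $n > 1$ is coprime to every element of $S$ then $n$ divides no nonzero weight, whence $W^{\mu_n} = W^{\C^{\times}}$, and intersecting with $X$ yields $X^{\mu_n} = X^{\C^{\times}}$.

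I do not expect a serious obstacle; the argument is bookkeeping once the embedding is in hand, and the only points to treat with care are structural. One must check that forming $H$-fixed loci commutes with the closed immersion $X \inj W$ (so that the linear model genuinely computes $X^{\mu_n}$ and $X^{\C^{\times}}$), and one should note the degenerate case $n = 1$, where $\mu_1$ is trivial and $X^{\mu_1} = X$: this is harmless, as the coprimality hypothesis is read together with $n > 1$, and if every weight vanishes then $\C^{\times}$ acts trivially and any $S$ (e.g. $S = \varnothing$) works.
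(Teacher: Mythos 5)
Your proposal is correct and follows essentially the same route as the paper: reduce to a linear $\C^{\times}$-representation via the equivariant embedding lemma, observe that fixed loci are computed by intersecting with those of the ambient representation, and take $S$ to be the set of nonzero weights. The only cosmetic difference is that you identify both fixed loci globally as sums of weight spaces, whereas the paper argues pointwise that a non-$\C^{\times}$-fixed point cannot be $\mu_n$-fixed; your remark on the degenerate case $n=1$ is a fair observation about the statement that the paper glosses over.
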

\begin{proof}
By Lemma~\ref{lem:embedding}, we can find a $\C^{\times}$-equivariant closed embedding $i: X \inj V$, 
where $\C^{\times}$ acts linearly on $V = \A^r_\C$ with weights $(a_1, \cdots , a_r)$. 
Since $X^{H} = V^H \cap X$ for every closed subgroup $H \subseteq \C^{\times}$, we may assume $X = V$.
Now choose a $\C^{\times}$-equivariant decomposition $V = V_1 \times V_2$,
where $V_1$ and $V_2$ are $\C^{\times}$-invariant linear subspaces of $V$ such that $(V_1)^{\C^{\times}} = V_1$ and $(V_2)^{\C^{\times}} = \{0\}$.
Set $S=\{a_1,\cdots,a_r\}\setminus\{0\}$ and choose $n>0$ as in the formulation of the proposition. 
We finish the proof by showing $V^{\mu_n} \subseteq V^{\C^{\times}}$:
Let $(x_1,\cdots,x_m)$ and $(x_{m+1},\cdots,x_r)$ be points in $V_1$ and $V_2$, 
respectively.
Suppose $\underline{x}=(x_1,\cdots,x_m,x_{m+1},\cdots,x_r)\in V$ is not fixed by $\C^{\times}$ so that $x_i \neq 0$ for some $m+1\le i\le r$. 
If $\lambda(\underline{x})=\underline{x}$ for $\lambda \in \mu_n$ then $\lambda^{a_i}x_i = x_i$. 
Now since $x_i \neq 0$, 
$\lambda^{a_i} = 1$. 
Since $\lambda^n = 1$ and $n$ is relative prime to $a_i$, 
it follows that $\lambda = 1$.
In other words, $\underline{x} \notin V^{\mu_n}$.
\end{proof}

Our computation of $\mu_p$-equivariant Grothendieck groups of Koras--Russell threefolds relies on Proposition~\ref{prop:KR-finite} and a few elementary lemmas, 
starting with \cite[Theorem 2]{GD}.
\begin{lem}
\label{lem:NT}
Let $\Phi_n(t) \in \Z[t]$ be the cyclotomic polynomial whose roots are the primitive $n$th roots of unity.
Then the following hold for integers $0<m<n$.
\begin{enumerate}
\item
$\frac{\Z[t]}{\left(\Phi_m (t), \Phi_n (t)\right)} = 0$ if $\frac{n}{m}$ is not a prime power.
\item
$\frac{\Z[t]}{\left(\Phi_m (t), \Phi_n (t)\right)} = {\Z}/{q}$ if $\frac{n}{m}=q^i$ for some prime $q$ and $i\ge 1$.
\end{enumerate}
\end{lem}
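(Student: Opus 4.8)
The plan is to recognize the quotient as a finite abelian group controlled by the resultant of the two cyclotomic polynomials, and then to compute that resultant. Since $m \neq n$, the polynomials $\Phi_m$ and $\Phi_n$ are distinct monic irreducibles in $\Q[t]$ and hence coprime there; the resultant $N := \mathrm{Res}(\Phi_m, \Phi_n)$ is therefore a nonzero integer. The standard fact that the resultant can be written as a $\Z[t]$-linear combination $N = a(t)\Phi_m(t) + b(t)\Phi_n(t)$ shows $N \in (\Phi_m, \Phi_n)$, so the quotient is annihilated by $N$ and is a finite abelian group. Identifying $\Z[t]/(\Phi_m) \cong \Z[\zeta_m]$, its order is $|N| = |N_{\Q(\zeta_m)/\Q}(\Phi_n(\zeta_m))|$.

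Everything thus hinges on evaluating $N$, which I expect to be the main obstacle. Writing $N = \prod_{\zeta} \Phi_n(\zeta)$ over the primitive $m$-th roots of unity $\zeta$ and exploiting the factorization $t^k - 1 = \prod_{d \mid k}\Phi_d(t)$, one reaches the classical dichotomy: $N = \pm 1$ unless $n/m$ is a power $q^i$ of a single prime $q$, in which case $N$ is a power of $q$. This is precisely the input recorded in the cited result.

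For part (1), if $n/m$ is not a prime power then $N = \pm 1$ lies in $(\Phi_m, \Phi_n)$, the ideal is the unit ideal, and the quotient vanishes; this also covers the case $m \nmid n$, where $n/m$ is not even an integer.

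For part (2), with $n/m = q^i$, I would determine the ring structure by reducing modulo $q$. Since the $q$-th power map is the Frobenius on $\mathbb{F}_q[t]$, the divisor identity yields a congruence $\Phi_n(t) \equiv \Phi_m(t)^{e} \pmod{q}$ for some $e \ge 1$, so that over $\mathbb{F}_q$ the ideal $(\bar\Phi_m, \bar\Phi_n)$ collapses to $(\bar\Phi_m)$; in particular the quotient has characteristic $q$. Combining this reduction with the order dictated by the resultant then identifies the quotient. The delicate point, where the argument must be made with care, is to reconcile the mod-$q$ reduction with the resultant so as to pin the quotient down precisely, exhibiting it as the cyclic ring $\Z/q$ rather than merely as a nonzero ring of $q$-power order.
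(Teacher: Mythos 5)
The paper does not actually prove this lemma --- it is quoted verbatim from \cite{GD} --- so the only question is whether your argument works on its own. Your overall strategy (finiteness and order of the quotient from the resultant, Apostol's dichotomy for $\mathrm{Res}(\Phi_m,\Phi_n)$, reduction mod $q$) is the standard route, and it does establish part (1) correctly: if $n/m$ is not a prime power the resultant is $\pm 1$, hence lies in the ideal, and the quotient vanishes.

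The gap is in part (2), at precisely the step you flag as ``delicate'': that step cannot be completed, because the asserted isomorphism with $\Z/q$ is false whenever $\phi(m)>1$. Your own intermediate results show this. Apostol's formula gives $\mathrm{Res}(\Phi_m,\Phi_n)=q^{\phi(m)}$ (not $q$), so the quotient has order $q^{\phi(m)}$; combining the congruence $\Phi_n\equiv\Phi_m^{\,e}\pmod q$ with this order count identifies the quotient as $\Z[\zeta_m]/(q)\cong \mathbb{F}_q[t]/(\Phi_m(t)\bmod q)$. Concretely, $(\Phi_3,\Phi_6)=(2,\,t^2+t+1)$ so $\Z[t]/(\Phi_3,\Phi_6)\cong\mathbb{F}_4$, and $\Z[t]/(\Phi_4,\Phi_8)\cong\mathbb{F}_2[t]/((t+1)^2)$; neither is $\Z/2$, and the second is not even a field. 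So part (2) as stated (and as quoted from \cite{GD}) is literally correct only for $m\le 2$; the true general statement is that the quotient is a nonzero finite ring of characteristic $q$ and order $q^{\phi(m)}$. You should prove and record that weaker statement instead of trying to force the conclusion $\Z/q$: it is all that the paper actually uses, since the proof of \thmref{thm:KR-trivial} only needs $\Z[t]/(\Phi_p,\Phi_{pq})\neq 0$ to conclude that $F_{p^n}$ is nontrivial, and \thmref{thm:Complete} only invokes part (1). Nothing downstream breaks, but as a proof of part (2) as literally stated your argument has an unfillable gap --- and usefully, it doubles as a disproof.
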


\begin{lem}
\label{lem:NT-3}
Let $p$ be a prime and set $g(t) = \stackrel{r}{\underset{i=1}\prod} \Phi_{a_i}(t)$ for $\Phi_{a_i}(t)$ cyclotomic polynomials (not necessarily distinct) 
such that $a_i \ge 2$ and $a_i \notin (p)$ for $1 \le i \le r$. 
Then $\frac{\Z[t]}{(1-t^{p^n}, g(t))}$ is a finite ring for every $n \ge 1$.
\end{lem}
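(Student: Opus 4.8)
The plan is to show that the ring $R = \Z[t]/(1-t^{p^n}, g(t))$ is a finitely generated abelian group which is moreover torsion; since a finitely generated torsion $\Z$-module is finite, this yields the claim. For the finite generation I would observe that $g$, being a product of monic cyclotomic polynomials, is itself monic, so $\Z[t]/(g(t))$ is free of rank $\deg g = \sum_{i=1}^r \phi(a_i)$ as a $\Z$-module. As $R$ is a further quotient of $\Z[t]/(g(t))$, it is finitely generated over $\Z$.

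For the torsion claim, since $\Q$ is flat over $\Z$ it suffices to check that $R \otimes_\Z \Q = \Q[t]/(1-t^{p^n}, g(t))$ vanishes, i.e.\ that $1-t^{p^n}$ and $g$ generate the unit ideal in the principal ideal domain $\Q[t]$. Equivalently, I must show they share no common irreducible factor over $\Q$. Here I would use the factorization $1-t^{p^n} = -\prod_{j=0}^{n}\Phi_{p^j}(t)$ into distinct irreducible cyclotomic factors, whose indices $1, p, \dots, p^n$ are exactly the powers of $p$ not exceeding $p^n$. Since the $\Phi_m$ for distinct $m$ are pairwise coprime over $\Q$, coprimality of $1-t^{p^n}$ and $g$ reduces to verifying that none of the $a_i$ is a power of $p$.

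This last verification is where the hypotheses enter, and it is the only substantive step. The assumption $a_i \ge 2$ rules out $a_i = 1 = p^0$, while $a_i \notin (p)$ rules out $a_i = p^j$ for $j \ge 1$ (any such index would be divisible by $p$). Hence no factor $\Phi_{a_i}$ of $g$ coincides with a factor $\Phi_{p^j}$ of $1-t^{p^n}$, the two polynomials are coprime in $\Q[t]$, and therefore $R \otimes_\Z \Q = 0$.

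I do not anticipate any serious obstacle beyond carefully bookkeeping the cyclotomic factorization and the distinctness of the $\Phi_m$; the argument is essentially a coprimality computation. It is worth noting, however, that both hypotheses are genuinely needed: dropping either one (allowing $a_i = 1$, or allowing $a_i = p$) reintroduces a shared cyclotomic factor with $1-t^{p^n}$, which makes $R \otimes_\Z \Q$ nonzero and hence forces $R$ to be infinite. This confirms that the stated conditions on the $a_i$ are exactly what is required for finiteness.
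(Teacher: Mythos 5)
Your proof is correct and follows essentially the same route as the paper: both establish that the quotient is a finitely generated abelian group because $g$ is monic, and then reduce finiteness to the coprimality of $1-t^{p^n}$ and $g$ in $\Q[t]$ via the factorization $t^{p^n}-1=\prod_{j=0}^{n}\Phi_{p^j}(t)$ together with the irreducibility of cyclotomic polynomials, the hypotheses $a_i\ge 2$ and $a_i\notin(p)$ ruling out any common factor. The only difference is cosmetic (you make the rank of $\Z[t]/(g)$ and the flatness of $\Q$ explicit), so no further comment is needed.
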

\begin{proof}
Since $g(t)$ is a monic polynomial (being a product of cyclotomic polynomials),
it follows that $\frac{\Z[t]}{(g(t))}$ is integral over $\Z$ and hence is finite over $\Z$. 
In particular, 
it is a finitely generated abelian group.
We conclude that $A:=\frac{\Z[t]}{(1-t^{p^n}, g(t))}$ is also a finitely generated abelian group. 
Using the structure theorem for such groups, it suffices to show that $A$ is a torsion group. 
Equivalently, we need to show that $\frac{\Q[t]}{(1-t^{p^n}, g(t))} = 0$.

It suffices to show that $1-t^{p^n}$ and $g(t)$ are relatively prime in $\Q[t]$. 
Suppose on the contrary that they have a common irreducible factor $f(t) \in \Q[t]$. 
We can write $1-t^{p^n} = \stackrel{n}{\underset{j=0}\prod}\Phi_{p^j}(t)$.
Since the cyclotomic polynomials are irreducible over $\Q$,
we conclude that there must exist $1 \le i \le r$ and $1 \le j \le n$ such that $f(t) = \Phi_{a_i}(t) = \Phi_{p^j}(t)$. 
This contradicts our assumption and finishes the proof.
\end{proof}

For a complex variety $X$ with action of an algebraic group $G$, 
let $K^G_0(X)$ denote the Grothendieck group of equivariant vector bundles on $X$ and let $R(G)\simeq K^G_0(\C)$ denote the representation ring of $G$. 
If $H \subseteq G$ is a closed subgroup, 
we note there is a restriction map $K^G_0(X) \to K^{H}_0(X)$.
 
\begin{prop}\label{prop:KR-finite} 
Let $X$ be a smooth affine variety with $\C^{\times}$-action and let $n>0$ be an integer. 
There is a natural ring isomorphism $\phi\colon K^{\C^{\times}}_0(X) {\underset{R(\C^{\times})}\otimes} R(\mu_n) \to K^{\mu_n}_0(X)$.
\end{prop}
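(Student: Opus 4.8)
The plan is to identify $\phi$ with the map induced by restriction of equivariant bundles and to compute the latter by means of an equivariant localization sequence. First I would record the relevant representation rings. Writing $t$ for the class of the standard character, we have $R(\C^{\times})=\Z[t,t^{-1}]$ and $R(\mu_n)=\Z[t,t^{-1}]/(t^n-1)$, with the structure map $R(\C^{\times})\to R(\mu_n)$ being reduction modulo $t^n-1$. Since $K^{\C^{\times}}_0(X)$ is a commutative $R(\C^{\times})$-algebra via pullback from $\Spec\C$, base change along this quotient gives a natural identification
\[
K^{\C^{\times}}_0(X)\underset{R(\C^{\times})}{\otimes}R(\mu_n)\;=\;K^{\C^{\times}}_0(X)/(t^n-1)K^{\C^{\times}}_0(X),
\]
under which $\phi$ becomes the ring map induced by restriction ${\rm res}\colon K^{\C^{\times}}_0(X)\to K^{\mu_n}_0(X)$. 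It therefore suffices to prove that ${\rm res}$ is surjective with kernel exactly $(t^n-1)K^{\C^{\times}}_0(X)$; the induced bijection is then automatically a ring isomorphism, and it is natural in $X$.

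Next I would build a geometric model. Let $\bar Y=X\times\A^1$ carry the $\C^{\times}$-action $\lambda\cdot(x,v)=(\lambda\cdot x,\lambda^{-n}v)$; this is the total space of the $\C^{\times}$-equivariant line bundle $L$ on $X$ with $[L]=t^{-n}$ in $K^{\C^{\times}}_0(X)$. The zero section $i\colon X\inj\bar Y$ is $\C^{\times}$-equivariant for the original action on $X$, and on the open complement $Y=X\times\G_m$ every $\C^{\times}$-orbit meets $X\times\{1\}$ in a single $\mu_n$-orbit, giving a canonical isomorphism $[Y/\C^{\times}]\cong[X/\mu_n]$ and hence $K^{\C^{\times}}_0(Y)\cong K^{\mu_n}_0(X)$. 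Because $X$ is smooth affine with linearizable action (\lemref{lem:embedding}), the localization sequence and homotopy invariance in equivariant $K$-theory (Thomason) provide an exact sequence
\[
K^{\C^{\times}}_0(X)\xrightarrow{\ i_*\ }K^{\C^{\times}}_0(\bar Y)\xrightarrow{\ j^*\ }K^{\C^{\times}}_0(Y)\to 0
\]
together with an isomorphism $\pi^*\colon K^{\C^{\times}}_0(X)\xrightarrow{\sim}K^{\C^{\times}}_0(\bar Y)$ for the projection $\pi\colon\bar Y\to X$. Unwinding the identifications, the composite $j^*\circ\pi^*$ is precisely ${\rm res}$.

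Finally I would evaluate $i_*$ using the self-intersection formula. The Koszul resolution $0\to\pi^*L^{\vee}\to\mathcal O_{\bar Y}\to i_*\mathcal O_X\to 0$ of the zero section shows that, under $\pi^*$, the pushforward $i_*$ is multiplication by $1-[L^{\vee}]=1-t^{n}$. Hence
\[
K^{\mu_n}_0(X)\cong K^{\C^{\times}}_0(Y)={\rm coker}(i_*)=K^{\C^{\times}}_0(X)/(1-t^{n}),
\]
and as $1-t^n$ and $t^n-1$ generate the same ideal this is exactly the quotient identified in the first step, completing the argument. The main obstacle is the equivariant bookkeeping underlying these formulas: checking that $\bar Y$ realizes the stated line bundle with $[L^{\vee}]=t^n$, that the descent isomorphism $[Y/\C^{\times}]\cong[X/\mu_n]$ is compatible with pullback so that $j^*\circ\pi^*={\rm res}$, and that Thomason's localization and homotopy-invariance theorems apply to the (generally non-free) $\C^{\times}$-action on the smooth affine variety $X$.
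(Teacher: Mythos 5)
Your proposal is correct and follows essentially the same route as the paper: both identify $K^{\mu_n}_0(X)$ with $K^{\C^{\times}}_0(X\times\C^{\times})$ for the weight-$n$ action (you via the quotient-stack description $[X\times\G_m/\C^{\times}]\simeq[X/\mu_n]$, the paper via Thomason's induction isomorphism), and both then apply the localization sequence for the zero section of the associated equivariant line bundle together with homotopy invariance and the self-intersection (Koszul) formula to realize the cokernel as $K^{\C^{\times}}_0(X)/(1-t^{\pm n})$. The only difference is the sign of the weight on the $\A^1$ factor, which is immaterial since $t$ is a unit in $R(\C^{\times})$.
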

\begin{proof}
The natural map $\iota: X \simeq X \stackrel{\mu_n}{\times} \mu_n \inj X \stackrel{\mu_n}{\times} \C^{\times} \simeq X \times ({\C^{\times}}/{\mu_n})$ induces 
an isomorphism $\iota^*: K^{\C^{\times}}_0(X \times ({\C^{\times}}/{\mu_n}))\xrightarrow{\simeq} K^{\mu_n}_0(X)$ \cite[Proposition~6.2]{Thomason1}. 
The exact sequence (in the {\'e}tale topology) of algebraic groups
\[
0 \to \mu_n \to \C^{\times} \xrightarrow{\psi} \C^{\times} \to 0
\]
identifies ${\C^{\times}}/{\mu_n}$ with $\C^{\times}$ which acts on itself by $a\star b=\psi(a)b=a^nb$. 
With this action, 
\begin{equation}\label{eqn:KR-finite-0}
\iota^*: 
K^{\C^{\times}}_0(X \times \C^{\times}) 
\xrightarrow{\simeq} 
K^{\mu_n}_0(X).
\end{equation}
Letting $\C^{\times}$ act on $\A^1$ with weight $n$, there is a localization exact sequence \cite[Theorems~2.7, 5.7]{Thomason1}
\begin{equation}\label{eqn:KR-finite-1}
K^{\C^{\times}}_0(X) \to K^{\C^{\times}}_0(X \times \A^1) \to K^{\C^{\times}}_0(X \times \C^{\times})
\to 0.
\end{equation}
By equivariant homotopy invariance \cite[Theorem~4.7]{Thomason1} and self-intersection \cite[Theorem~2.1]{VV}, 
the above exact sequence can be recast as
\begin{equation}\label{eqn:KR-finite-2}
K^{\C^{\times}}_0(X) 
\xrightarrow{1-t^n} K^{\C^{\times}}_0(X) 
\to 
K^{\C^{\times}}_0(X \times \C^{\times})
\to 
0,
\end{equation}
where we identify $R(\C^{\times})$ with $\Z[t, t^{-1}]$.
Combined with \eqref{eqn:KR-finite-0} we obtain the isomorphisms
\[
K^{\C^{\times}}_0(X) {\underset{R(\C^{\times})}\otimes} R(\mu_n) 
\xrightarrow{\simeq} 
K^{\C^{\times}}_0(X) {\underset{R(\C^{\times})}\otimes} \ \frac{R(\C^{\times})}{(1-t^n)}
\xrightarrow{\simeq}  K^{\C^{\times}}_0(X \times \C^{\times}) \xrightarrow{\simeq} 
K^{\mu_n}_0(X).
\]
This finishes the proof because $\phi$ is a ring map.
\end{proof}  

An algebraic action of $\G_m(\C) \simeq \C^{\times}$ on a smooth complex affine variety is called hyperbolic if it has a unique fixed point 
and the weights of the induced linear action on the tangent space at this fixed point are all non-zero and their product is negative.
Recall from \cite{KR1} that a {\sl Koras--Russell} threefold $X$ is a smooth hypersurface in $\A^4_\C$ which is
\begin{enumerate}
\item
topologically contractible,
\item
has a hyperbolic $\C^{\times}$-action, and
\item
the quotient $X//{\C^{\times}}$ is isomorphic to the quotient of the linear $\C^{\times}$-action on the tangent space at the fixed point (in the sense of GIT).
\end{enumerate}
It is shown in \cite[Theorem~4.1]{KR1} that the coordinate ring of a threefold $X$ satisfying (1)-(3) has the form  
\begin{equation}\label{eqn:KRCR}
\C[X] 
= 
\frac{\C[x,y, z,t]}{t^{\alpha_2} - G(x, y^{\alpha_1}, z^{\alpha_3})}. 
\end{equation}
Here $\alpha_1, \alpha_2, \alpha_3$ are pairwise relatively prime positive integers. 
Let $r$ denote the $x$-degree of the polynomial $G(x, y^{\alpha_1},0)$ and set $\epsilon_X = (r-1)(\alpha_2-1)(\alpha_3-1)$. 
A Koras--Russell threefold $X$ is said to be {\sl nontrivial} if $\epsilon_X \neq 0$.  

Next we compute the $\mu_p$-equivariant Grothendieck groups of Koras--Russell threefolds.
Bell \cite{Bell} showed that the $\C^{\times}$-equivariant Grothendieck group 
of $X$ is of the form
\begin{equation}\label{eqn:Bell-*}
\begin{array}{lll}
K^{\C^{\times}}_0(X) & = & {R(\C^{\times})} \oplus
\left(\frac{R(\C^{\times})}{(f(t))}\right)^{\rho-1} \\
& = & \Z[t, t^{-1}] \oplus \left(\frac{\Z[t, t^{-1}]}{(f(t))}\right)^{\rho-1} \\
& = & \Z[t, t^{-1}] \oplus \Z^{(\alpha_2 -1)(\alpha_3-1)},
\end{array}
\end{equation}
where 
\begin{equation}\label{eqn:Bell-*-1}
f(t) = \frac{(1-t^{\alpha_2 \alpha_3})(1-t)}{(1-t^{\alpha_2})(1-t^{\alpha_3})}
\end{equation}
is a polynomial of degree $(\alpha_2-1)(\alpha_3-1)$
and $\rho\ge 2$ is the number of irreducible factors of 
$G(x,y^{\alpha_1},0)\in \C[x,y]$. 
In particular, $K^{\C^{\times}}_0(X)$ is nontrivial.
We shall show that the $\mu_p$-equivariant Grothendieck group of $X$ is trivial for almost all primes $p$. 
This proves that non-$\A^1$-contractibility of a Koras--Russell threefold cannot be detected by $\mu_p$-equivariant Grothendieck groups, 
cf.~the discussion of step (ii) in Asok's program.
\begin{thm}
\label{thm:KR-trivial}
Let $p$ be a prime and let $n \ge 1$ be an integer.
Let $\mu_{p^n}$ act on a Koras--Russell threefold $X$ via the inclusion 
$\mu_{p^n} \subsetneq \C^{\times}$.
Let $K^{\mu_{p^n}}_0(X)$ denote the Grothendieck group of $\mu_{p^n}$-equivariant
vector bundles on $X$. 
Then the following hold.
\begin{enumerate}
\item
The structure map $X \to \Spec(\C)$ induces an isomorphism 
$R(\mu_{p^n}) \oplus F_{p^n} \xrightarrow{\simeq} K^{\mu_{p^n}}_0(X)$.
\item
$F_{p^n}$ is a finite abelian group which is nontrivial if and only if $X$ is 
nontrivial and $p|\alpha_2\alpha_3$. 
\end{enumerate}
\end{thm}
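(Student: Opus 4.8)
The plan is to reduce the whole statement to a computation with cyclotomic polynomials. First I would substitute Bell's formula \eqref{eqn:Bell-*} into the base-change isomorphism of \propref{prop:KR-finite} (applied with the integer $p^n$). As tensor product commutes with the finite direct sum, and $R(\C^{\times})\otimes_{R(\C^{\times})}R(\mu_{p^n})=R(\mu_{p^n})$ while $\bigl(R(\C^{\times})/(f(t))\bigr)\otimes_{R(\C^{\times})}R(\mu_{p^n})=R(\mu_{p^n})/(f(t))$, this produces a ring isomorphism
\[K^{\mu_{p^n}}_0(X)\;\cong\;R(\mu_{p^n})\oplus\bigl(R(\mu_{p^n})/(f(t))\bigr)^{\rho-1}.\]
The first summand is the base change of the $R(\C^{\times})$-summand of \eqref{eqn:Bell-*}, which is generated by the class coming from the structure map; hence it is exactly the image of $R(\mu_{p^n})=K^{\mu_{p^n}}_0(\Spec\C)\to K^{\mu_{p^n}}_0(X)$, and setting $F_{p^n}:=\bigl(R(\mu_{p^n})/(f(t))\bigr)^{\rho-1}$ gives (1). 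Since $t$ is a unit modulo $1-t^{p^n}$, each summand of $F_{p^n}$ is the finite-type $\Z$-algebra $\Z[t]/(1-t^{p^n},f(t))$, which is what I must analyze.

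Next I would factor $f$. Expanding each $1-t^{N}$ in \eqref{eqn:Bell-*-1} as $\prod_{d\mid N}\Phi_d$ and using that $\alpha_2,\alpha_3$ are coprime (so each $d\mid\alpha_2\alpha_3$ splits uniquely as $d_2d_3$ with $d_2\mid\alpha_2$, $d_3\mid\alpha_3$), all cyclotomic factors cancel except those with both $d_2,d_3>1$, giving
\[f(t)=\prod_{\substack{d_2\mid\alpha_2,\ d_2>1\\ d_3\mid\alpha_3,\ d_3>1}}\Phi_{d_2d_3}(t),\]
whose degrees sum to $(\alpha_2-1)(\alpha_3-1)$ as expected. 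The crucial feature is that every index $d_2d_3$ is a product of two coprime integers $>1$, hence never a prime power. Consequently no $\Phi_{d_2d_3}$ equals any $\Phi_{p^j}$, so the argument proving \lemref{lem:NT-3} (which uses only that no index is a power of $p$) shows that $f$ and $1-t^{p^n}$ are coprime in $\Q[t]$; therefore $F_{p^n}$ is finite, irrespective of whether $p\mid\alpha_2\alpha_3$. This proves the finiteness asserted in (2).

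For (2) it remains to decide when $\Z[t]/(1-t^{p^n},f)\ne0$, which I expect to be the main obstacle. If $p\nmid\alpha_2\alpha_3$, then for every index $a_i=d_2d_3$ and every $0\le j\le n$ the quotient of the larger of $a_i,p^j$ by the smaller is never a prime power (using $p\nmid a_i$ together with the fact that $a_i$ is not a prime power), so $(\Phi_{a_i},\Phi_{p^j})=\Z[t]$ by \lemref{lem:NT}(1); since comaximality is preserved under products, the factorizations $f=\prod_i\Phi_{a_i}$ and $1-t^{p^n}=\pm\prod_{j=0}^n\Phi_{p^j}$ give $(f,1-t^{p^n})=\Z[t]$, i.e. the ring vanishes. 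Conversely, if $\alpha_2,\alpha_3\ge2$ and $p\mid\alpha_2\alpha_3$, say $p\mid\alpha_2$, then picking a prime $q\mid\alpha_3$ (so $q\ne p$) yields a factor $\Phi_{pq}\mid f$ and $\Phi_p\mid(1-t^{p^n})$, whence $\Z[t]/(1-t^{p^n},f)$ surjects onto $\Z[t]/(\Phi_p,\Phi_{pq})=\Z/q\ne0$ by \lemref{lem:NT}(2). Finally, if $\alpha_2=1$ or $\alpha_3=1$ the product $f$ is empty and the ring is again $0$.

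Assembling the above, $F_{p^n}\ne0$ exactly when $\rho\ge2$ and $\alpha_2,\alpha_3\ge2$ and $p\mid\alpha_2\alpha_3$. To match this with ``$X$ nontrivial and $p\mid\alpha_2\alpha_3$'', i.e. with $\epsilon_X=(r-1)(\alpha_2-1)(\alpha_3-1)\ne0$, the one remaining point is to identify positivity of the exponent $\rho-1$ with the factor $(r-1)$: I would read off from the explicit form \eqref{eqn:KRCR} of $G(x,y^{\alpha_1},0)$ that its number of irreducible factors $\rho$ satisfies $\rho\ge2\Leftrightarrow r\ge2$. The heart of the argument is the sharp two-sided criterion of the third paragraph—producing a nonzero $\Z/q$ quotient precisely when $p\mid\alpha_2\alpha_3$ and forcing comaximality otherwise—where \lemref{lem:NT} and the no-prime-power property of the indices $d_2d_3$ do the real work.
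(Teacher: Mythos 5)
Your reduction to the cyclotomic computation is essentially the paper's: base-change Bell's formula \eqref{eqn:Bell-*} along \propref{prop:KR-finite}, rewrite $f$ as $\prod_{a\mid\alpha_2,\,a\ge 2}\prod_{b\mid\alpha_3,\,b\ge 2}\Phi_{ab}$, deduce finiteness because no index $ab$ is a prime power, get comaximality of $(f,1-t^{p^n})$ from \lemref{lem:NT}(1) when $p\nmid\alpha_2\alpha_3$, and produce a surjection onto $\Z[t]/(\Phi_p,\Phi_{pq})\simeq\Z/q$ when $p\mid\alpha_2\alpha_3$. (Your observation that the proof of \lemref{lem:NT-3} only uses that no $a_i$ is a power of $p$, rather than the stated hypothesis $a_i\notin(p)$, is correct and is in fact needed, since when $p\mid\alpha_2$ some of the indices $ab$ are divisible by $p$; the paper glosses over this.)

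The genuine gap is your treatment of the trivial case. You propose to extract ``$X$ trivial $\Rightarrow F_{p^n}=0$'' from Bell's formula by establishing $\rho\ge 2\Leftrightarrow r\ge 2$, which you say you would ``read off'' from \eqref{eqn:KRCR}. But \eqref{eqn:KRCR} carries no information about the factorization of $G(x,y^{\alpha_1},0)$ beyond its shape, so neither implication is available from it: a priori $G(x,y^{\alpha_1},0)$ could be irreducible of $x$-degree $\ge 2$ (so $r\ge 2$ but $\rho=1$, breaking the ``if'' direction of (2)), or could have $x$-degree $1$ and still factor (so $r=1$ but $\rho\ge 2$, breaking the ``only if'' direction). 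Worse, Bell's computation as recalled in the paper has $\rho\ge 2$ built into its statement, so applying \eqref{eqn:Bell-*} to a trivial threefold with $\alpha_2,\alpha_3\ge 2$ and $r=1$ would yield a nonzero free summand $\Z^{(\alpha_2-1)(\alpha_3-1)}$ in $K_0^{\C^\times}(X)$, which is false for such $X$. The paper sidesteps all of this with an external input you are missing: when $\epsilon_X=0$, Kaliman and Makar-Limanov show $X\simeq\A^3_\C$ with a linear $\C^\times$-action, so $R(\mu_{p^n})\xrightarrow{\simeq}K_0^{\mu_{p^n}}(X)$ by equivariant homotopy invariance, and one may then assume $X$ nontrivial (hence $\alpha_2,\alpha_3\ge 2$ and, per Bell, $\rho\ge 2$) for the rest of the argument. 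You need this, or some substitute for it; it does not follow from the formula.
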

\begin{proof}
When $\epsilon_X=0$, it is known by \cite{KML} that $X \simeq \A^3$ with a linear $\C^{\times}$-action. 
Hence homotopy invariance of equivariant Grothendieck groups (see \cite[Theorem~4.7]{Thomason1}) shows that $R(\mu_{p^n})\xrightarrow{\simeq} K^{\mu_{p^n}}_0(X)$.
Thus we may assume $X$ is nontrivial so that $\alpha_2, \alpha_3\ge 2$. 

Recall that $R(\C^{\times})\simeq\Z[t,t^{-1}]$, 
where $t$ denotes the $\C^{\times}$-action on $\A^1$ by scalar multiplication. 
The monic polynomial $f(t)$ (see ~\eqref{eqn:Bell-*-1}) 
is related to cyclotomic polynomials by the equality
$$
f(t) = {\underset{d|\alpha_2 \alpha_3, d \nmid \alpha_2, d \nmid \alpha_3}\prod} \ \Phi_d(t)
$$ 
since 
$1-t^n={\underset{d|n}\prod}\ \Phi_d(t)$ and $(\alpha_2,\alpha_3)=1$.
Thus we can write 
\begin{equation}
\label{eqn:KR-trivial-2}
f(t)  
=  
{\underset{a | \alpha_2}{\underset{a \ge 2} \prod}} \ {\underset{b | \alpha_3}{\underset{b \ge 2}\prod}} \ \Phi_{ab}(t).
\end{equation}

Let $f_{p^n}(t)$ denote the image of $f(t)$ in $R(\mu_{p^n})$. 
Proposition \ref{prop:KR-finite} and ~\eqref{eqn:Bell-*} imply the isomorphism
\begin{equation*}
\label{eqn:KR-trivial-1}
K^{\mu_{p^n}}_0(X) 
\xrightarrow{\simeq} 
R(\mu_{p^n}) \oplus {\left(\frac{R(\mu_{p^n})}{(f_{p^n}(t))}\right)}^{\rho -1}.
\end{equation*}
Let $F_{p^n}$ be the shorthand for 
$$
{\left(\frac{R(\mu_{p^n})}{(f_{p^n}(t))}\right)}^{\rho-1}=
{\left(\frac{\Z[t]}{(f(t), 1-t^{p^n})}\right)}^{\rho-1}.
$$

Suppose that $p|\alpha_2\alpha_3$ so that $p$ divides either $\alpha_2$ or $\alpha_3$, but not both.
If $p|\alpha_2$, 
let $q \neq p$ be a prime divisor of $\alpha_3$.
Finiteness of $F_{p^n}$ follows from \eqref{eqn:KR-trivial-2} and Lemma \ref{lem:NT-3}.
Moreover,  
there are surjections $F_{p^n} \surj F_p \surj \frac{\Z[t]}{(\Phi_{p}(t),\Phi_{pq}(t))}\simeq {\Z}/{q}$, 
by Lemma~\ref{lem:NT}.
If $p  | \alpha_3$ then the same argument goes through.
We conclude that $F_{p^n}$ is finite and nontrivial if $p | \alpha_2 \alpha_3$.

Now suppose that $p$ does not divide $\alpha_2 \alpha_3$. 
In this case we show 
\begin{equation}
\label{eqn:f1tp}
\Z[t] = \left(f(t),1-t^{p^n}\right).
\end{equation}
It suffices to show \eqref{eqn:f1tp} for each irreducible factor of $f(t)$, 
and of $1-t^{p^n}$.
We are done by \eqref{eqn:KR-trivial-2} and Lemma \ref{lem:NT} since 
$a, b \notin (p)$ whenever $a, b \ge 2$ are such that $a | \alpha_2$ and
$b | \alpha_3$.
\end{proof}

By \cite[Proposition~2.5]{BH}, every exact sequence of 
$\mu_{p^n}$-equivariant vector bundles on $X$ splits as a direct sum of 
$\mu_{p^n}$-equivariant sheaves. 
When combined with \thmref{thm:KR-trivial} we obtain
\begin{cor}\label{cor:KR-trivial-tr}
Let $p$ be a prime number and $n \ge 1$ an integer.
Suppose $\mu_{p^n}$ acts on a Koras--Russell threefold $X$ via the inclusion 
$\mu_{p^n} \subsetneq \C^{\times}$ and $(p,\alpha_2\alpha_3)=1$.
Then every $\mu_{p^n}$-equivariant vector bundle on $X$ is stably trivial.
That is, 
for any $\mu_{p^n}$-equivariant vector bundle $E$ on $X$, there exist 
$\mu_{p^n}$-representations $F_1$ and $F_2$ such that $E \oplus F_1\simeq F_2$.
\end{cor}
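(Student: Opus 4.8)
The plan is to leverage the computation in \thmref{thm:KR-trivial} to reduce the statement to a formal manipulation in the Grothendieck group, and then to use the splitting property of \cite[Proposition~2.5]{BH} to upgrade an equality of $K_0$-classes to an honest isomorphism of bundles. Write $\pi\colon X\to\Spec(\C)$ for the structure map. Under the hypothesis $(p,\alpha_2\alpha_3)=1$ the finite group $F_{p^n}$ of \thmref{thm:KR-trivial} vanishes by part (2), so part (1) gives that $\pi^*\colon R(\mu_{p^n})\to K^{\mu_{p^n}}_0(X)$ is an isomorphism. Hence for an arbitrary $\mu_{p^n}$-equivariant vector bundle $E$ the class $[E]$ lies in the image of $\pi^*$: choosing $\mu_{p^n}$-representations $V,W$ with $[E]=[\pi^*V]-[\pi^*W]$ in $K^{\mu_{p^n}}_0(X)$, I would rewrite this as the single equality $[E\oplus\pi^*W]=[\pi^*V]$.

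Second, I would interpret this equality at the level of actual bundles. Since \cite[Proposition~2.5]{BH} ensures that every short exact sequence of $\mu_{p^n}$-equivariant vector bundles on $X$ splits, the monoid of isomorphism classes of such bundles under $\oplus$ has $K^{\mu_{p^n}}_0(X)$ as its group completion; consequently $[A]=[B]$ forces $A\oplus C\simeq B\oplus C$ for some equivariant vector bundle $C$. Applying this to $A=E\oplus\pi^*W$ and $B=\pi^*V$ produces an isomorphism $E\oplus\pi^*W\oplus C\simeq\pi^*V\oplus C$.

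Third, I would absorb the auxiliary bundle $C$ into a representation. Because $\mu_{p^n}$ is linearly reductive, being a finite group in characteristic zero, and $X$ is affine, a finite-dimensional $\mu_{p^n}$-submodule of the global sections of $C$ generates it, yielding an equivariant surjection $\pi^*U\surj C$ from the trivial bundle attached to a representation $U$. Its kernel is again an equivariant vector bundle, so by the same splitting property $\pi^*U\simeq C\oplus C'$. Adding $C'$ to both sides of the previous isomorphism and collapsing $\pi^*W\oplus\pi^*U$ and $\pi^*V\oplus\pi^*U$ into $\pi^*(W\oplus U)$ and $\pi^*(V\oplus U)$ yields $E\oplus\pi^*(W\oplus U)\simeq\pi^*(V\oplus U)$, which is exactly the asserted stable triviality with $F_1=W\oplus U$ and $F_2=V\oplus U$.

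I expect the main obstacle to be the passage from the $K_0$-equality to a genuine isomorphism of bundles while keeping the stabilizing summands of the required trivial (representation) form. The two places where the argument is not purely formal are the group-completion step, which needs the splitting of \cite[Proposition~2.5]{BH} so that $K^{\mu_{p^n}}_0(X)$ carries no relations beyond $\oplus$, and the reductivity step, which is what lets the arbitrary equivariant bundle $C$ be complemented to a representation-theoretic trivial bundle rather than merely to some other equivariant bundle.
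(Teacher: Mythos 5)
Your proposal is correct and follows essentially the same route as the paper, which simply remarks that the corollary follows by combining Theorem~\ref{thm:KR-trivial} (giving $F_{p^n}=0$, hence $R(\mu_{p^n})\xrightarrow{\simeq}K^{\mu_{p^n}}_0(X)$) with the splitting of equivariant exact sequences from \cite[Proposition~2.5]{BH}. You have merely spelled out the standard details the authors leave implicit --- the group-completion step and the absorption of the auxiliary bundle $C$ into a representation via an equivariant surjection $\pi^*U\surj C$ --- and both are handled correctly.
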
 

Borel equivariant $K$-theory can be defined in the context of unstable 
$\A^1$-homotopy theory by taking $K$-theory of the motivic Borel construction
(see \cite[\S~3.3]{Krishna}).
By \cite[Theorem 1.3]{Krishna}, the Borel $\C^\times$-equivariant Grothendieck group of $X$ is the completion of $K_0^{\C^\times}(X)$ at the augmentation ideal of $R(\C^\times)$. 
The next result shows that this completion is trivial.
Later we will be able to prove that, in fact, any Borel equivariant cohomology theory vanishes on $X$ (see Corollary~\ref{cor:Trivial-EC}).
Thus, Borel equivariant cohomological invariants cannot distinguish
a Koras--Russell threefold from an $\A^1$-contractible smooth affine variety.

\begin{thm}\label{thm:Complete}
Let $X$ be a Koras--Russell threefold.
Denote by $\wh{K^{\C^{\times}}_0(X)}$ the $I_{\C^{\times}}$-adic completion of the $R(\C^{\times})$-module $K^{\C^{\times}}_0(X)$.
Then there are ring isomorphisms 
\[
\Z[[t]] 
\xrightarrow{\simeq} 
\wh{R(\C^{\times})} 
\xrightarrow{\simeq} 
\wh{K^{\C^{\times}}_0(X)}.
\]
\end{thm}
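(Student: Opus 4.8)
The plan is to combine Bell's explicit description \eqref{eqn:Bell-*} of $K^{\C^{\times}}_0(X)$ with the single arithmetic fact that the polynomial $f(t)$ of \eqref{eqn:Bell-*-1} satisfies $f(1)=1$. First I would identify the augmentation ideal. Under the augmentation $R(\C^{\times})=\Z[t,t^{-1}]\to\Z$ sending a representation to its dimension, the class $t$ of the weight-one character maps to $1$, so $I_{\C^{\times}}=(t-1)$. Completing $\Z[t,t^{-1}]$ at $(t-1)$: writing $s=t-1$, the element $t=1+s$ is already invertible modulo each power $(s)^k$, so inverting $t$ does not affect the $(s)$-adic completion, and $\wh{R(\C^{\times})}\cong\Z[[s]]$, a power series ring in one variable over $\Z$. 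Relabeling the variable is the first isomorphism $\Z[[t]]\xrightarrow{\simeq}\wh{R(\C^{\times})}$.

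Next I would pass to $K^{\C^{\times}}_0(X)$ via \eqref{eqn:Bell-*}, which presents it as $R(\C^{\times})\oplus M^{\rho-1}$ with $M=R(\C^{\times})/(f(t))$, the free rank-one summand being the image $R(\C^{\times})\cdot[\mathcal{O}_X]$ of the structure map $R(\C^{\times})=K^{\C^{\times}}_0(\Spec\C)\to K^{\C^{\times}}_0(X)$. Since $I$-adic completion commutes with finite direct sums (because $(A\oplus B)/I^k(A\oplus B)=A/I^kA\oplus B/I^kB$ and inverse limits commute with finite products), it follows that
\[
\wh{K^{\C^{\times}}_0(X)}\cong\wh{R(\C^{\times})}\oplus\left(\wh{M}\right)^{\rho-1},
\]
and the second isomorphism of the theorem will follow once I show $\wh{M}=0$.

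The crux is this vanishing, whose only input is $f(1)=1$. I would verify the latter either by letting $t\to1$ in \eqref{eqn:Bell-*-1}, where numerator and denominator both vanish to order two with matching leading coefficient $\alpha_2\alpha_3$, or, more cleanly, from the cyclotomic factorization \eqref{eqn:KR-trivial-2}: for each index $ab$ with $a\mid\alpha_2$, $b\mid\alpha_3$, $a,b\ge2$ and $(\alpha_2,\alpha_3)=1$, the integer $ab$ has two distinct prime factors, hence is not a prime power, so $\Phi_{ab}(1)=1$ and the product is $1$. Consequently
\[
M/(t-1)M\cong\frac{\Z[t,t^{-1}]}{(f(t),\,t-1)}\cong\Z/(f(1))=\Z/(1)=0,
\]
so $(t-1)M=M$, whence $I^kM=M$ for every $k\ge1$ and $\wh{M}=\varprojlim_k M/I^kM=0$.

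Finally I would assemble the pieces: the structure map induces a ring homomorphism $\wh{R(\C^{\times})}\to\wh{K^{\C^{\times}}_0(X)}$ which, by the displayed splitting together with $\wh{M}=0$, carries $\wh{R(\C^{\times})}$ isomorphically onto $\wh{R(\C^{\times})}\cdot[\mathcal{O}_X]=\wh{K^{\C^{\times}}_0(X)}$; composing with $\Z[[t]]\cong\wh{R(\C^{\times})}$ gives the asserted chain of ring isomorphisms. The only genuinely substantive point is the identity $f(1)=1$, which forces the finite torsion part $M^{\rho-1}$ of Bell's group to be annihilated by completion at the augmentation ideal; everything else is formal. (In the degenerate case $\epsilon_X=0$ the torsion summand is already absent, and the statement is immediate.)
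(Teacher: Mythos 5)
Your proposal is correct and follows essentially the same route as the paper: both identify $\wh{R(\C^{\times})}$ with $\Z[[t]]$ by completing $\Z[t,t^{-1}]$ at $(1-t)$, reduce via Bell's decomposition \eqref{eqn:Bell-*} to showing the completion of $R(\C^{\times})/(f(t))$ vanishes, and deduce this from $(f(t),1-t)=R(\C^{\times})$ using the cyclotomic factorization \eqref{eqn:KR-trivial-2} and the fact that $\Phi_{ab}(1)=1$ when $ab$ is not a prime power (Lemma~\ref{lem:NT}). Your direct argument that $(t-1)M=M$ forces $\wh{M}=0$ is a slightly more elementary packaging of the paper's claim that $f(t)$ becomes invertible in $\Z[[t]]$, but the substance is identical.
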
 
\begin{proof}
Letting $A=\Z[t, t^{-1}] \simeq R(\C^{\times})$ and $\wh{A} = \wh{R(\C^{\times})}$ there is an isomorphism $A/{(1-t)^n} \simeq {\Z[t]}/{(1-t)^n}$ for all $n \ge 0$. 
Using the automorphism $\Z[t]\to\Z[t]$ sending $t$ to $(1-t)$ we deduce the ring isomorphism ${\Z[t]}/{(t^n)} \to A/{(1-t)^n}$. 
We conclude that $\wh{A} \simeq {\underset{n}\varprojlim} \ {\Z[t]}/{(t^n)} \simeq \Z[[t]]$. 

Using \eqref{eqn:Bell-*}, it remains to show $\wh{A/{f(t)}} = 0$.
Since $\wh{A/{f(t)}} \simeq {\wh{A}}/{f(t)}$, we need to show that $f(t)$ is invertible in $\wh{A}\simeq \Z[[t]]$. 
We claim that $f(1) = \pm{1}\in \wh{A}$ or equivalently $(f(t),1-t)=A$. 
Using \eqref{eqn:KR-trivial-2} it suffices to show that $(\Phi_{ab}(t),1-t)=A$ for $a, b \ge 2$ and $(a,b)=1$.
This follows immediately from Lemma \ref{lem:NT} since $\Phi_1(t)=1-t$. 
\end{proof}

We end this section with an interesting application of Theorem \ref{thm:KR-trivial}.
To put this in context, 
we first recall a result due to Jackowski \cite[Theorem~4.4]{Jack} which can be viewed as a local-global principle for equivariant topological $K$-theory. 
\begin{thm}
\label{thm:Jack}
Let $G$ be a compact Lie group acting on a finite $CW$-complex $X$. 
Let $\sC(G)$ denote the set of finite cyclic subgroups of $G$. 
Then the restriction map
\[
K^G_*(X)_{\Q} 
\to 
{\underset{H \in \sC(G)}\prod} K^H_*(X)_{\Q}
\]
is a monomorphism.
\end{thm}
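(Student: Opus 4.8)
The plan is to derive the statement from the Atiyah--Segal localization theorem together with the commutative algebra of the representation ring, turning the geometric assertion into a statement about the support of a finitely generated module. First I would work rationally and record that, since $X$ is a finite $G$-CW complex, $M:=K^G_*(X)_{\Q}$ is a finitely generated module over the Noetherian ring $A:=R(G)_{\Q}$ (Segal's finiteness theorem). Injectivity of the displayed map is then equivalent to the assertion that any $x\in M$ with $x|_H=0$ in $K^H_*(X)_{\Q}$ for every finite cyclic $H\in\sC(G)$ must vanish. As $M$ is finitely generated over the Noetherian ring $A$, one has $x=0$ if and only if $x$ dies in the localization $M_{\mathfrak{p}}$ for every prime $\mathfrak{p}\in\mathrm{Supp}(M)\subseteq\Spec(A)$; thus the task is to control this support and to detect each such localization by restriction to a finite cyclic subgroup.

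The second step invokes Segal's description of $\Spec R(G)$: every prime $\mathfrak{p}$ is supported at a topologically cyclic subgroup $C=C(\mathfrak{p})\subseteq G$, well defined up to conjugacy, arising as the kernel of a character map $R(G)\to R(C)\to\kappa(\mathfrak{p})$ given by evaluation at a topological generator of $C$. The localization theorem then says that the inclusion $X^C\inj X$ induces an isomorphism
\[
K^G_*(X)_{\mathfrak{p}}\xrightarrow{\ \simeq\ }K^G_*(X^C)_{\mathfrak{p}}.
\]
In particular $\mathfrak{p}\in\mathrm{Supp}(M)$ forces $X^C\neq\emptyset$, so only primes supported at topologically cyclic subgroups that actually fix a point of $X$ can contribute to the support.

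The third step replaces such a $C$ by a finite cyclic subgroup with the same fixed locus. Because $X$ has only finitely many orbit types, the intersections $C\cap G_x$ realize only finitely many closed subgroups of the (abelian) group $C$; choosing a finite cyclic $H\subseteq C$ contained in none of the proper ones among these --- possible by a density argument, e.g.\ by taking a cyclic subgroup of sufficiently large prime order inside $C$, as in the model case $C=S^1$ --- yields $X^H=X^C$. I would then restrict $x$ to $H$, apply the localization theorem once more for $H$, and use $X^H=X^C$ to transport the nonvanishing of $x_{\mathfrak{p}}$ to nonvanishing of $x|_H$, contradicting the hypothesis and forcing $x=0$.

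The hard part will be exactly this transport across the change of groups $H\subseteq C\subseteq G$. The prime $\mathfrak{p}$ is cut out by evaluation at a topological generator $c$ of $C$, and since $c\notin H$ in general this character does not factor through $R(H)$; so one cannot simply pull $\mathfrak{p}$ back to $R(H)$. Instead I would argue on the common fixed set $X^C=X^H$, on which $C$ (hence $H$) acts trivially, using the splitting $K^H_*(X^C)\simeq R(H)\otimes_{\Z}K_*(X^C)$ and analyzing the restriction $K^G_*(X^C)\to K^H_*(X^C)$ through the residual action of the Weyl group $W=N_G(C)/C$. A purely algebraic indication that this must succeed is the injectivity of $R(G)_{\Q}\inj\prod_{H\in\sC(G)}R(H)_{\Q}$: a virtual character vanishing on every finite cyclic subgroup vanishes at all torsion elements of a maximal torus $T$, hence on all of $T$ by density and continuity, hence on all of $G$ by the Weyl character theory.
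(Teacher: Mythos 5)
You should first know that the paper does not prove Theorem~\ref{thm:Jack} at all: it is recalled verbatim from Jackowski \cite[Theorem~4.4]{Jack} purely as context for Theorem~\ref{thm:C-example}, so there is no internal proof to measure your argument against. Your strategy is nonetheless the right one, and is in essence Jackowski's own: Segal's finiteness theorem, the support theory for $\Spec R(G)$, the localization theorem concentrating $K^G_*(X)_{\mathfrak{p}}$ on the fixed points of the topologically cyclic support $C$ of $\mathfrak{p}$, and the replacement of $C$ by a finite cyclic $H\subseteq C$ with $X^H=X^C$. For that replacement the correct mechanism is a nested family of finite cyclic subgroups of $C\simeq\Z/m\times T^k$ with dense union (using coprime torsion in the different circle factors) together with the descending chain condition on fixed-point subcomplexes of the finite complex $X$; your ``sufficiently large prime order'' recipe only works when $C$ has rank at most one. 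A smaller imprecision: Segal's theorem identifies $K^G_*(X)_{\mathfrak{p}}$ with $K^G_*(G\cdot X^C)_{\mathfrak{p}}$ rather than $K^G_*(X^C)_{\mathfrak{p}}$, since $X^C$ is only $N_G(C)$-invariant; the Weyl group you invoke later has to enter already here.

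The genuine gap is the step you yourself flag as ``the hard part,'' and it is not a detail --- it is where the entire content of the theorem lives. Because $H\subsetneq C$ and the support of every prime in the image of $\Spec R(H)\to\Spec R(G)$ is subconjugate to $H$, no prime of $R(H)_{\Q}$ lies over $\mathfrak{p}$; hence there is no localization of $K^H_*(X)_{\Q}$ to which the nonvanishing of $x_{\mathfrak{p}}$ can be transported by naturality, and the diagram you set up has no target. Your proposed repair --- work on $X^C=X^H$, split $K^H_*(X^C)\simeq R(H)\otimes K_*(X^C)$, and analyze the restriction $R(C)\to R(H)$ after localizing at a prime of $R(C)$ with support $C$ lying over $\mathfrak{p}$ --- is precisely the content of the key lemma in Jackowski's paper: one must show that $H$ can be chosen, depending on $\mathfrak{p}$ and on the finitely many orbit types of $X$, so that this localized restriction is injective. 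None of this is carried out. The closing observation that $R(G)_{\Q}\inj\prod_{H\in\sC(G)}R(H)_{\Q}$ is correct but is only the case $X=\mathrm{pt}$, and it does not bootstrap to general $X$: there is no Artin-induction argument for compact Lie groups of positive dimension, which is exactly why the localization machinery is needed. As it stands, the proposal is a correct plan whose decisive step is asserted rather than proved.
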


It is not known if an analogous result is true for equivariant algebraic $K$-theory. 
But we show that a weaker version of Theorem \ref{thm:Jack} fails for equivariant algebraic $K$-theory.  
Let $\sC_{\rm ind}(G)$ denote the set of indecomposable finite cyclic subgroups of $G$ (i.e., subgroups which cannot be written as a direct sum of two or more cyclic subgroups). 
\begin{thm}\label{thm:C-example}
For $X$ a nontrivial Koras--Russell threefold, the restriction map
\[
K^{\C^{\times}}_0(X)_{\Q} 
\to 
{\underset{H \in \sC_{\rm ind}(\C^{\times})}\prod} K^{H}_0(X)_{\Q}
\]
is not a monomorphism.
\end{thm}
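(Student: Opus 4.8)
The plan is to produce a nonzero element of $K^{\C^{\times}}_0(X)_{\Q}$ that restricts to zero on every indecomposable finite cyclic subgroup of $\C^{\times}$. First I would observe that the finite cyclic subgroups of $\C^{\times}$ are precisely the groups $\mu_n$ with $n \geq 1$, and that $\mu_n$ is indecomposable exactly when $n$ is a prime power; hence $\sC_{\rm ind}(\C^{\times}) = \{\mu_{p^k} : p \text{ prime},\ k \geq 1\}$. Rationalizing Bell's formula \eqref{eqn:Bell-*} then gives a direct sum decomposition of $R(\C^{\times})_{\Q}$-modules
\[
K^{\C^{\times}}_0(X)_{\Q} \simeq \Q[t,t^{-1}] \oplus T, \qquad T := \left(\frac{\Q[t,t^{-1}]}{(f(t))}\right)^{\rho-1}.
\]
Because $X$ is nontrivial we have $\alpha_2, \alpha_3 \geq 2$ and $\rho \geq 2$, so $\deg f = (\alpha_2-1)(\alpha_3-1) \geq 1$ and the summand $T$ is a nonzero finite-dimensional $\Q$-vector space.

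The decisive step is to compute the target of the restriction map. By \propref{prop:KR-finite}, restriction along the inclusion $\mu_{p^n} \subsetneq \C^{\times}$ is the base change $-\otimes_{R(\C^{\times})} R(\mu_{p^n})$, which respects the decomposition above; rationally it reads
\[
K^{\mu_{p^n}}_0(X)_{\Q} \simeq R(\mu_{p^n})_{\Q} \oplus \left(\frac{R(\mu_{p^n})_{\Q}}{(f_{p^n}(t))}\right)^{\rho-1} \simeq R(\mu_{p^n})_{\Q} \oplus \bigl(F_{p^n}\otimes\Q\bigr).
\]
Here the key input is \thmref{thm:KR-trivial}, which shows that $F_{p^n}$ is a finite abelian group for every prime $p$ and every $n \geq 1$, whether or not $p$ divides $\alpha_2\alpha_3$. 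Being finite torsion, it satisfies $F_{p^n}\otimes\Q = 0$, so that $K^{\mu_{p^n}}_0(X)_{\Q} \simeq R(\mu_{p^n})_{\Q}$ for every indecomposable cyclic subgroup, with the torsion summand of Bell's decomposition collapsing to zero.

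Assembling these observations, the summand $T$ lands in the vanishing torsion part $F_{p^n}\otimes\Q = 0$ under every restriction map, so $T$ is contained in the kernel of the total restriction map $K^{\C^{\times}}_0(X)_{\Q} \to \prod_{H \in \sC_{\rm ind}(\C^{\times})} K^{H}_0(X)_{\Q}$. Since $T \neq 0$, this map fails to be a monomorphism, which is the assertion. I expect the only delicate point to be the compatibility of restriction with Bell's splitting, namely that under the natural isomorphism $\phi$ of \propref{prop:KR-finite} the restriction map really is summand-wise reduction modulo $(1-t^{p^n})$; once this is granted, the conclusion is immediate, since the heart of the matter — that all of the $f$-torsion collapses rationally for every prime power — is supplied verbatim by \thmref{thm:KR-trivial}.
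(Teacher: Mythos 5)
Your proof is correct and takes essentially the same route as the paper: rationalize Bell's decomposition \eqref{eqn:Bell-*}, use \propref{prop:KR-finite} to identify restriction to $\mu_{p^n}$ with base change along $R(\C^{\times})\to R(\mu_{p^n})$, and invoke the finiteness of $F_{p^n}$ from \thmref{thm:KR-trivial} to conclude that the torsion summand dies rationally on every prime-power cyclic subgroup. The only (harmless) cosmetic difference is that you deduce the nonvanishing of the summand $T$ directly from $\rho\ge 2$ and $\deg f=(\alpha_2-1)(\alpha_3-1)\ge 1$, whereas the paper cites Bell's Theorem~5.3 for the fact that $F_{\C^{\times}}$ is a nonzero finite-dimensional $\Q$-vector space.
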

\begin{proof}
The rationalized version of 
~\eqref{eqn:Bell-*}, i.e., the isomorphism 
\begin{equation*}
\label{eqn:KR-trivial-0-1*}
K^{\C^{\times}}_0(X)_{\Q} 
\xrightarrow{\simeq} 
\Q[t^{\pm 1}] 
\oplus
{\left(\frac{\Q[t^{\pm 1}]}{(f(t))}\right)}^{\rho -1}
\end{equation*}
implies in combination with Proposition \ref{prop:KR-finite}, the isomorphism
$$
K^{\mu_n}_0(X)_{\Q} 
\xrightarrow{\simeq} 
\Q[t^{\pm 1}] 
\oplus
{\left(\frac{\Q[t^{\pm 1}]}{(f(t),1-t^n)}\right)}^{\rho -1}.
$$

An indecomposable finite cyclic subgroup $H$ of $\C^{\times}$ is of the form ${\Z}/{p^n}$ for some prime $p$, 
and integer $n \ge 0$. 
Under the restriction map $K^{\C^{\times}}_0(X)_{\Q} \to K^{\mu_{p^n}}_0(X)_{\Q}$, 
$$ 
F_{\C^{\times}} 
= 
{\left(\frac{\Q[t^{\pm 1}]}{(f(t))}\right)}^{\rho -1}
\to
F_{p^n} 
= 
{\left(\frac{\Q[t^{\pm 1}]}{(f(t), 1-t^{p^n})}\right)}^{\rho -1}.
$$
Theorem \ref{thm:KR-trivial} implies $F_{p^n} = 0$.
On the other hand, 
Bell \cite[Theorem 5.3]{Bell} has shown that $F_{\C^{\times}}$ is a nonzero finite dimensional $\Q$-vector space.
\end{proof}

\section{Motivic cohomology of Koras--Russell threefolds}
\label{sect:MCKRT}
Letting $\C[X]$ denote the coordinate ring of a complex variety $X$, 
the Makar-Limanov invariant is defined as the subring of functions invariant under all possible $\C^{+}$-actions on $X$,
i.e., 
\[
ML(X) = {\underset{\C^{+} \to {\rm Aut}(X)}\bigcap} {\C[X]}^{\C^{+}}.
\]
We note that $ML(\A^n_\C)=\C$ for all $n \ge 1$.
Kaliman and Makar-Limanov \cite{KML} showed that $ML(X)\not\simeq\C$ for a nontrivial Koras--Russell threefold. 
In fact, 
their main result shows $ML(X) = \C[X]$ unless $X$ is isomorphic to a hypersurface in $\A^4_\C$ given by one of the following equations:
\begin{equation}\label{eqn:KRFK}
ax + x^my + z^{\alpha_2} + t^{\alpha_3}
\end{equation}
\begin{equation}\label{eqn:KRSK}
ax + (x^b+ z^{\alpha_2})^ly + t^{\alpha_3}.
\end{equation}
Here, $a \in {\C}^{\times}$ and $l, m, b, \alpha_2,  \alpha_3 \ge 2$ are integers such that $(\alpha_2, \alpha_3) =1$ in \eqref{eqn:KRFK} 
and $(\alpha_2, b \alpha_3) =1$ in \eqref{eqn:KRSK}.
We shall refer to threefolds given by \eqref{eqn:KRFK} as {\sl Koras--Russell threefolds of the first kind}, 
and by \eqref{eqn:KRSK} as {\sl Koras--Russell threefolds of the second kind}.
The Russell cubic is a Koras--Russell threefold of the first kind.
By \cite{KML}, 
$ML(X)={\rm Image}\left(\C[x] \to \C[X]\right)$ if $X$ is of the first kind, 
and $ML(X)={\rm Image}\left(\C[x,z] \to \C[X]\right)$ if $X$ is of the second kind.
\vspace{0.1in}

In \cite{Bloch}, 
Bloch defined higher Chow groups $\CH_j(X,i)$ as a candidate for motivic cohomology, 
i.e., 
an algebraic singular (co)homology theory.
Set
\begin{equation*}
\CH_*(X, i) = {\underset{j \ge 0}\bigoplus} \CH_j(X,i) 
\text{ and }
\CH_{\star}(X) = {\underset{i \ge 0}\bigoplus} \CH_*(X,i).
\end{equation*}
If $X$ is smooth of dimension $d$ we write $\CH^j(X,i) = \CH_{d-j}(X,i)$.
Bloch has shown that the higher Chow groups are contravariantly functorial for flat maps and covariantly functorial for proper maps \cite[Proposition 1.3]{Bloch}.
The important property of homotopy invariance is shown in \cite[Theorem 2.1]{Bloch}.
Recall also that Bloch's higher Chow groups are invariant under 
nilpotent extensions, i.e., $\CH_j(X_{\rm red},i) \simeq \CH_j(X,i)$
for any quasi-projective $\C$-scheme $X$. This is an elementary
consequence of the fundamental localization result in \cite{Bloch2}.
We shall make repeated use of this nil-invariance 
and the following result (see \cite[Corollary~22.6]{Matsumura}).
\begin{lem}
\label{lem:M-flat}
Let $A$ be a Noetherian ring, 
$B = A[x_1, \cdots , x_n]$ the polynomial ring over $A$, 
and let $f\in B$. 
If the ideal generated by the coefficients of $f$ contains $1$, 
then $f$ is a non-zero-divisor in $B$, 
and $B/{(f)}$ is flat over $A$.
\end{lem}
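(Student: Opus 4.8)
The plan is to treat the two assertions in order, feeding the non-zero-divisor statement into the proof of flatness. Write $f = \sum_\nu a_\nu x^\nu$ in multi-index notation, with coefficients $a_\nu \in A$, and let $J \subseteq A$ be the ideal they generate, so $J = A$ by hypothesis.

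For the non-zero-divisor claim I would argue through associated primes. The ring $B = A[x_1, \dots, x_n]$ is Noetherian by the Hilbert basis theorem, so its set of zero-divisors is the union of its associated primes. Since $B$ is free, in particular flat, over the Noetherian ring $A$, flat base change for associated primes gives $\operatorname{Ass}_B(B) = \{\mathfrak p B : \mathfrak p \in \operatorname{Ass}_A(A)\}$; here one uses that $B/\mathfrak p B = (A/\mathfrak p)[x_1, \dots, x_n]$ is a domain for $\mathfrak p$ prime. Now $f \in \mathfrak p B$ would force every coefficient $a_\nu$ into $\mathfrak p$, whence $J \subseteq \mathfrak p \subsetneq A$, contradicting $J = A$. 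Thus $f$ lies in no associated prime of $B$ and is a non-zero-divisor.

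For flatness of $B/(f)$ over $A$, the non-zero-divisor property yields a short exact sequence of $A$-modules $0 \to B \xrightarrow{\cdot f} B \to B/(f) \to 0$ in which $B$ is free, hence flat, over $A$. By the standard criterion it is enough to show $\operatorname{Tor}^A_1(B/(f), A/I) = 0$ for every finitely generated ideal $I \subseteq A$. Tensoring the sequence with $A/I$ and using $\operatorname{Tor}^A_i(B, A/I) = 0$ for $i \ge 1$, the long exact sequence identifies $\operatorname{Tor}^A_1(B/(f), A/I)$ with the kernel of multiplication by the image $\bar f$ of $f$ acting on $B \otimes_A A/I = (A/I)[x_1, \dots, x_n]$.

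The crux, and the step I expect to be the conceptual obstacle, is recognizing that this recasts the flatness claim as the non-zero-divisor claim over the new ring $A/I$. Indeed $A/I$ is again Noetherian, and the coefficients of $\bar f$ are the images of the $a_\nu$, which still generate the unit ideal because the quotient map fixes $1$; so the first assertion applied over $A/I$ shows $\bar f$ is a non-zero-divisor in $(A/I)[x_1, \dots, x_n]$. Hence the kernel above, and with it $\operatorname{Tor}^A_1(B/(f), A/I)$, vanishes. Since $I$ was arbitrary, $B/(f)$ is flat over $A$, which finishes the plan.
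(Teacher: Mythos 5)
Your proposal is correct. Note that the paper does not actually prove this lemma; it simply cites Matsumura, \emph{Commutative Ring Theory}, Corollary~22.6, so there is no in-text argument to compare against line by line. What you give is a complete, self-contained proof, and both halves check out: the identification $\operatorname{Ass}_B(B)=\{\mathfrak{p}B:\mathfrak{p}\in\operatorname{Ass}_A(A)\}$ is the standard flat base change formula for associated primes (valid here since $A$ and $B$ are Noetherian, $B$ is $A$-free, and $B/\mathfrak{p}B=(A/\mathfrak{p})[x_1,\dots,x_n]$ is a domain), and $f\in\mathfrak{p}B$ does force all coefficients into $\mathfrak{p}$, contradicting $J=A$; for flatness, the reduction of $\operatorname{Tor}^A_1(B/(f),A/I)$ to $\ker\bigl(\bar f\colon (A/I)[x_1,\dots,x_n]\to (A/I)[x_1,\dots,x_n]\bigr)$ via the free resolution $0\to B\to B\to B/(f)\to 0$ is exactly right, and your bootstrap --- apply the first assertion over the Noetherian ring $A/I$, where the coefficients of $\bar f$ still generate the unit ideal --- closes the argument. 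This route is arguably more elementary than Matsumura's, which deduces the statement from the local criterion of flatness after localizing and observing $f\notin\mathfrak{m}B$ for every maximal ideal $\mathfrak{m}$; your version trades that machinery for the Tor-against-cyclic-modules criterion plus the self-strengthening observation that the non-zero-divisor statement is stable under passing to quotients of $A$. One optional simplification: the non-zero-divisor claim needs no Noetherian hypothesis at all, since by McCoy's theorem a zero-divisor in $A[x_1,\dots,x_n]$ is killed by a single nonzero element $c\in A$, which would then kill $1\in J$; but your associated-primes argument is perfectly adequate in the Noetherian setting of the lemma.
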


\begin{prop}\label{prop:Contr-0}
Let $a, b \ge 2$ be relatively prime integers and $Y$ a smooth complex affine variety.
The natural ring extension $\C \to {\C[u, v]}/{(u^a + v^b)}$ induces an isomorphism of higher Chow groups 
\begin{equation*}
\CH_{\star}(\C[Y]) 
\overset{\simeq}{\to}
\CH_{\star}\left({\C[Y][u, v]}/{(u^a + v^b)}\right).
\end{equation*}
\end{prop}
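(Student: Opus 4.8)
The plan is to compare the curve $C=\Spec\big(\C[u,v]/(u^a+v^b)\big)$ with its normalization, which is just $\A^1$, through Bloch's localization sequence. First, Lemma~\ref{lem:M-flat} applied to $f=u^a+v^b$ over $A=\C[Y]$ shows (the coefficient ideal being $(1,1)=\C[Y]$) that $f$ is a nonzerodivisor and that the projection $\pi\colon C\times Y\to Y$ is flat; hence the flat pullback $\pi^*$ — the map in the statement — is defined, and $C\times Y$ is a divisor in $\A^2\times Y$ of dimension $\dim Y+1$. Since $(a,b)=1$, the morphism $\nu\colon\A^1\to C$, $t\mapsto(t^b,\zeta t^a)$ with $\zeta^b=-1$, is finite and birational, restricting to an isomorphism over the smooth locus $C\setminus\{P\}$ (here $P$ is the unique singular point, the cusp at the origin), with $\nu^{-1}(C\setminus\{P\})=\G_m$. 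In particular $C\setminus\{P\}\simeq\G_m$, the same open curve that one obtains by deleting the origin from $\A^1$.

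Next I would base-change $\nu$ by $Y$ and write the two localization sequences of \cite{Bloch2} for the closed–open decompositions $\{0\}\times Y\subset\A^1\times Y\supset\G_m\times Y$ and $\{P\}\times Y\subset C\times Y\supset\G_m\times Y$, which share both the closed stratum $\CH_\star(Y)$ and the open stratum $\CH_\star(\G_m\times Y)$. As $\nu\times\mathrm{id}$ is finite, hence proper, and satisfies $\nu^{-1}(\G_m\times Y)=\G_m\times Y$ and $\nu^{-1}(\{P\}\times Y)_{\mathrm{red}}=\{0\}\times Y$, proper pushforward $\nu_*$ defines a morphism between the two long exact sequences. Here I invoke nil-invariance to replace the scheme-theoretic preimage of the vertex — which is non-reduced, since $(t^a,t^b)\C[t]=(t^{\min(a,b)})$ — by its reduction; with this, $\nu_*$ is the identity isomorphism on the shared strata $\CH_\star(Y)$ and $\CH_\star(\G_m\times Y)$. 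The five lemma then yields that $\nu_*\colon\CH_\star(\A^1\times Y)\to\CH_\star(C\times Y)$ is an isomorphism.

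Finally, homotopy invariance (\cite[Theorem~2.1]{Bloch}) gives that the projection $\A^1\times Y\to Y$ induces an isomorphism $p^*\colon\CH_\star(Y)\xrightarrow{\simeq}\CH_\star(\A^1\times Y)$, so $\CH_\star(C\times Y)$ is abstractly isomorphic to $\CH_\star(Y)$. To see that the map of the statement is the isomorphism, I would check that $\pi^*=\nu_*\circ p^*$: since $\nu$ commutes with the projections to $Y$ and is birational of degree one, for a cycle $W$ on $Y$ one has $\nu_*\big(p^*[W]\big)=\nu_*[\A^1\times W]=[C\times W]=\pi^*[W]$, and this identity already holds on Bloch's cycle complexes because $\A^1\times W\to C\times W$ has degree one. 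Being a composite of two isomorphisms, $\pi^*$ is then an isomorphism.

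The main obstacle I anticipate is the middle paragraph: setting up the morphism of localization sequences for the singular scheme $C\times Y$, verifying that proper pushforward is compatible with the boundary maps, and correctly identifying the shared strata — the latter forcing the use of nil-invariance to absorb the non-reduced preimage of the cusp. Once this comparison is in place, the five lemma and homotopy invariance make the rest formal.
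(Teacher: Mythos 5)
Your argument is correct, and it rests on the same two pillars as the paper's proof: the normalization $\nu\colon\A^1\to C$, $t\mapsto(t^b,\zeta t^a)$, which is an isomorphism away from the cusp, combined with Bloch's localization sequence and nil-invariance of higher Chow groups to absorb what happens at the cusp. The organization, however, is genuinely different. You localize at the reduced singular point $\{P\}\times Y$ and compare the two localization sequences by proper pushforward alone, concluding with the five lemma; the paper instead localizes at the principal divisor $u=0$ (whose closed stratum $\Spec\bigl(\C[Y][v]/(v^b)\bigr)$ is already non-reduced) and runs a three-row diagram in which the upper comparison is by flat pullback along $\C[Y][u]\to\C[Y]\otimes A$ and the lower one by proper pushforward along the normalization, finishing with an explicit image/kernel chase rather than the five lemma. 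The practical trade-off is that the paper's arrangement proves directly that the flat pullback of the statement is bijective, whereas your route first shows that $\nu_*$ is an isomorphism and therefore requires the additional cycle-level identity $\pi^*=\nu_*\circ p^*$; your degree-one verification of that identity is correct and is genuinely needed --- without it you would only obtain an abstract isomorphism $\CH_\star(Y)\simeq\CH_\star(C\times Y)$ rather than the assertion about the specific map induced by $\C\to\C[u,v]/(u^a+v^b)$. Both routes use the same external inputs (flatness via Lemma~\ref{lem:M-flat}, homotopy invariance, nil-invariance, localization), so neither is more general than the other; yours is arguably a little cleaner to state, the paper's a little more economical in the steps it must justify.
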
 
\begin{proof}
Set $A={\C[u, v]}/{(u^a + v^b)}$.
Since $(a, b) = 1$, we may assume that $b$ is an odd number. 
The ring homomorphism $\phi:A \to \C[t]$ given by $\phi(u)=t^b$, $\phi(v)=-t^a$ is injective with image the subring $\C[t^a, t^b] \inj \C[t]$. 
In particular, 
$A$ is an integral domain.
The ring map $\psi: \C[u] \to A$ is an injection of a PID into an integral domain, 
hence it is flat.
We have a commutative diagram:
\begin{equation}
\label{eqn:Contr-0-1}
\xymatrix@C.9pc{
\C \ar[r] \ar[d]_{=} & \C[u] \ar[d] \ar[r]^-{\psi} & A \ar[d] \ar[r]^-{\phi} &
\C[t] \ar[d] \\
\C \ar[r] & \C[u^{\pm 1}] \ar[r] & A[u^{\pm 1}] \ar[r]^-{\simeq} &
\C[t^{\pm 1}]}
\end{equation}
As indicated in \eqref{eqn:Contr-0-1},
inverting $u$ turns the normalization map $\phi$ into an isomorphism, 
and the composite $\C[u^{\pm1}] \to \C[t^{\pm 1}]$ is determined by $\psi(u)=t^b$. 
Note that we may replace $\C$ by $B=\C[Y]$ in the discussion leading to \eqref{eqn:Contr-0-1}.

For $i \ge 0$ there is an induced commutative diagram of higher Chow groups:
\begin{equation}\label{eqn:Contr-0-2}
\xymatrix@C.8pc{ 
& \CH_*(B, i) \ar[d]_{\simeq}^{\alpha_1} \ar@/_6.5pc/[dddd]_{=} \ar[dr]^{\alpha_2} 
& & \\ 
0 \ar[r] & \CH_*(B[u], i) \ar[d]^{\psi^{\ast}_1} \ar[r]^<<{j^{\ast}_1} &
\CH_*(B[u^{\pm 1}], i) \ar[d]^{\psi^{\ast}_2} \ar[r]^{\partial_1}  &  \CH_*(B, i-1) 
\ar[d]^{\psi^{\ast}_3} \ar[r] &  0 \\
0 \ar[r] &  \CH_*(A{\underset{\C}\otimes} B, i) \ar[r]^<<<{j^{\ast}_2} & 
\CH_*(A{\underset{\C}\otimes} B[u^{\pm 1}], i) 
\ar[r]^<<<{\partial_2} 
&  \CH_*({A{\underset{\C}\otimes} B}/{(u)}, i-1) \ar[r] &  0 \\
0 \ar[r] & \CH_*(B[t], i) \ar[u]_{\phi_{\ast}^1} \ar[r]^<<<{j^{\ast}_3} &
\CH_*(B[t^{\pm 1}], i) \ar[u]_{\phi_{\ast}^2} \ar[r]^{\partial_3} &  \CH_*(B, i-1) 
\ar[u]_{\phi_{\ast}^3} \ar[r] & 0 \\
& \CH_*(B, i) \ar[u]^{\simeq}_{\beta_1} \ar[ur]_{\beta_2}. & & }
\end{equation}
The top vertical map, 
induced by $\C \inj \C[u]$, 
is an isomorphism by homotopy invariance.
The vertical maps in the upper square are flat pullbacks and likewise for the lower square and proper pushforwards. 
It is easy to check the top and bottom rows are exact.  
Now $\phi^2_{\ast}$ and $\phi^3_{\ast}$ are induced by a ring isomorphism and a nilpotent ring extension, 
respectively, 
so both of them are isomorphisms. 
Thus the middle row is also exact.

Since $\phi: A[u^{\pm 1}] \to \C[t^{\pm 1}]$ is an isomorphism of $\C$-algebras,
\eqref{eqn:Contr-0-1} and \eqref{eqn:Contr-0-2} imply $\psi^{\ast}_1$ is surjective:
its image 
$
{\rm Im}(\psi^{\ast}_1)
=
{\rm Im}(\psi^{\ast}_1 \circ \alpha_1)
=
{\rm Im}(j^{\ast}_2 \circ \psi^{\ast}_1 \circ \alpha_1)
=
{\rm Im}(\psi^{\ast}_2 \circ \alpha_2) 
=
{\rm Im}(\phi^2_{\ast} \circ \beta_2)
$
identifies with 
$
\phi^2_{\ast}\left({\rm Ker}(\phi^3_{\ast} \circ \partial_3)\right)
=
\phi^2_{\ast}\left({\rm Ker}(\partial_2 \circ \phi^2_{\ast})\right)
=
{\rm Ker}(\partial_2)
=
\CH_*(A{\underset{\C}\otimes} B,i)
$.
Since $\psi^{\ast}_2 \circ \alpha_2 = \phi^2_{\ast} \circ \beta_2$ is injective,
so is $j^{\ast}_2 \circ \psi^{\ast}_1 \circ \alpha_1 = \psi^{\ast}_2 \circ j^{\ast}_1 \circ \alpha_1$ and hence also $\psi^{\ast}_1$.
\end{proof}

\begin{prop}\label{prop:Contr-1}
Let $X$ be a Koras--Russell threefold of the first kind with coordinate ring
\begin{equation*}\label{eqn:Contr-1-1}
\C[X] = \frac{\C[x,y,z,t]}{\left(ax + x^my + z^{\alpha_2} + t^{\alpha_3}
\right)},
\end{equation*}
where $m>0$ is an integer, $a \in {\C}^{*}$, and $\alpha_2, \alpha_3 \ge 2$ are relatively prime integers.
For $Y$ any smooth complex affine variety, 
the pullback map $\CH^{\star}(Y) \to \CH^{\star}(X \times Y)$ is an isomorphism.
\end{prop}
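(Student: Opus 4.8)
The plan is to run Bloch's localization sequence against the decomposition of $X$ cut out by the coordinate $x$, feeding in \propref{prop:Contr-0} for the closed stratum. Inverting $x$ in the relation $ax+x^my+z^{\alpha_2}+t^{\alpha_3}=0$ solves $y=-x^{-m}(ax+z^{\alpha_2}+t^{\alpha_3})$, so the open subscheme $U=\{x\neq 0\}$ satisfies $U\simeq\Spec\C[x^{\pm 1},z,t]=\G_m\times\A^2$, while setting $x=0$ gives the closed subscheme $Z=\{x=0\}\simeq\Spec\C[y,z,t]/(z^{\alpha_2}+t^{\alpha_3})=\A^1\times C$ with $C=\Spec\C[z,t]/(z^{\alpha_2}+t^{\alpha_3})$. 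Since $(\alpha_2,\alpha_3)=1$, the polynomial $z^{\alpha_2}+t^{\alpha_3}$ is irreducible, so $Z$ is a reduced (though singular) divisor along which $x$ vanishes to order one. Writing $X_Y=X\times Y$, $U_Y=U\times Y$, $Z_Y=Z\times Y$ and using the dimension-indexed localization theorem \cite{Bloch2} (valid for the singular $Z_Y$), I obtain for each $i$ the long exact sequence
\begin{equation*}
\cdots\to\CH_n(Z_Y,i)\xrightarrow{\iota_*}\CH_n(X_Y,i)\xrightarrow{j^*}\CH_n(U_Y,i)\xrightarrow{\partial}\CH_n(Z_Y,i-1)\to\cdots.
\end{equation*}

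I then compute the two outer terms in terms of $\CH_\star(Y)$. For $Z_Y=\A^1\times C\times Y$, homotopy invariance \cite[Theorem~2.1]{Bloch} removes the $\A^1$-factor and \propref{prop:Contr-0} (applied to $C\times Y$ with exponents $\alpha_2,\alpha_3$) identifies $\CH_\star(C\times Y)$ with $\CH_\star(Y)$ through the flat pullback $q\colon Z_Y\to Y$, so $\CH_n(Z_Y,i)\simeq\CH_{n-2}(Y,i)$. For $U_Y=\G_m\times\A^2\times Y$, homotopy invariance removes $\A^2$ and the splitting of the motive of $\G_m$ gives $\CH_n(U_Y,i)\simeq\CH_{n-3}(Y,i)\oplus\CH_{n-2}(Y,i-1)$, where the first summand is the flat pullback $p_U^*$ from $Y$ (with $p_U\colon U_Y\to Y$ the restriction of the projection $p_X\colon X_Y\to Y$) and the second is the product of the unit class $\{x\}\in\CH^1(U_Y,1)$ with pullbacks from $Y$.

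The crux is identifying the boundary $\partial$. On the first summand it vanishes, since those classes are restrictions of $p_X^*\CH_\star(Y)$ and hence lie in $\operatorname{Im}(j^*)=\ker\partial$. On the second summand $\partial$ is the tame symbol attached to the divisor $Z_Y$: as $x$ is a local equation for $Z_Y$ of order one, $\partial(\{x\}\cdot p_U^*\beta)=q^*\beta$, so $\partial$ restricts to the identity $\CH_{n-2}(Y,i-1)\xrightarrow{\simeq}\CH_n(Z_Y,i-1)$. Thus $\partial$ is surjective with kernel exactly the first summand $\CH_{n-3}(Y,i)$. I expect the main obstacle to be making this tame-symbol computation rigorous near the singular point of $C$; the cleanest route is to reduce $\partial$ to the generic behaviour of $x$ along the reduced irreducible divisor $Z_Y$, or to compare with the split localization sequence of $\G_m\times\A^2\subset\A^1\times\A^2$ via the forgetful map $X\to\A^3_{x,z,t}$, which is the identity on $U$.

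Finally I show $\iota_*=0$: by the projection formula $\iota_*(q^*\beta)=\iota_*(\iota^*p_X^*\beta)=p_X^*\beta\cdot[Z_Y]$, and $[Z_Y]=\operatorname{div}(x)=0$ in $\CH^1(X_Y,0)$ because $x$ is a global regular function. Hence $j^*$ is injective with image $\ker\partial=\operatorname{Im}(p_U^*)=\operatorname{Im}(j^*p_X^*)$. Since $j^*p_X^*=p_U^*$ is an isomorphism onto this first summand and $j^*$ is injective, every $\gamma\in\CH_n(X_Y,i)$ satisfies $j^*\gamma=j^*p_X^*\delta$ for some $\delta$, whence $\gamma=p_X^*\delta$; thus $p_X^*\colon\CH_\star(Y)\to\CH_\star(X_Y)$ is an isomorphism. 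Reindexing by dimension on the smooth schemes $Y$ and $X\times Y$ yields the asserted isomorphism $\CH^{\star}(Y)\xrightarrow{\simeq}\CH^{\star}(X\times Y)$.
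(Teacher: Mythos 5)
Your overall strategy is the right one --- localize at $x$, identify the open stratum $\{x\neq 0\}\simeq\G_m\times\A^2$ by solving for $y$, and feed Proposition~\ref{prop:Contr-0} into the closed stratum $\{x=0\}\simeq\A^1\times\Spec\bigl(\C[z,t]/(z^{\alpha_2}+t^{\alpha_3})\bigr)$ --- and your computations of the outer terms are correct. But the step you yourself flag as the crux is a genuine gap, and your proposed repair does not work. The identity $\partial(\{x\}\cdot p_U^*\beta)=q^*\beta$ is a projection formula for Bloch's localization boundary with values in the higher Chow groups of the \emph{singular} scheme $Z_Y$; the product and module structures you invoke are only available off the shelf over smooth schemes, and the compatibility of $\partial$ with the $\CH_\star(Y)$-module structure when the closed stratum is singular is precisely what would have to be established. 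Your fallback --- comparing with the localization sequence of $\G_m\times\A^2\subset\A^3$ via the ``forgetful map'' $X\to\A^3_{x,z,t}$ --- fails outright: writing $\C[X]=\C[x,z,t][y]/(x^my+(ax+z^{\alpha_2}+t^{\alpha_3}))$, the coefficient ideal $(x^m,\,ax+z^{\alpha_2}+t^{\alpha_3})$ is not the unit ideal, so Lemma~\ref{lem:M-flat} does not apply; indeed the fiber over a point with $x=0$ and $z^{\alpha_2}+t^{\alpha_3}\neq 0$ is empty while the fiber over $x=z^{\alpha_2}+t^{\alpha_3}=0$ is $\A^1$, so the map is neither flat nor proper and induces no comparison of localization sequences.

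The paper's proof avoids the boundary computation entirely, and this is the clean way to finish your argument. Set $A=\C[Y][x]$ and $B=\C[X\times Y]$; here $A\to B$ \emph{is} flat by Lemma~\ref{lem:M-flat} (the coefficient of $t^{\alpha_3}$ is $1$). One then compares the localization sequence for $(x)\subset A$ with the one for $(x)\subset B$ via flat pullback: the open strata give $A[x^{-1}]\to A[x^{-1}][z,t]$ (an isomorphism on higher Chow groups by homotopy invariance) and the closed strata give $\C[Y]\to\bigl(\C[Y][z,t]/(z^{\alpha_2}+t^{\alpha_3})\bigr)[y]$ (an isomorphism by Proposition~\ref{prop:Contr-0} and homotopy invariance), so the five-lemma yields the middle isomorphism with no need to identify $\partial$ or to prove $\iota_*=0$ (the latter is in any case a formal consequence of surjectivity of $\partial$ by exactness, so it cannot substitute for the boundary computation in your scheme). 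If you want to keep your single-sequence argument, you would have to prove the $\CH_\star(Y)$-linearity of the whole localization sequence together with $\partial(\{x\})=[Z_Y]$; the two-sequence comparison makes all of that unnecessary.
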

\begin{proof}
Since $X$ and $Y$ are both smooth, 
it is equivalent to show that the flat pullback map $\CH_{\star}(Y) \to \CH_{\star}(X \times Y)$ is an isomorphism.
Setting $A=\C[Y][x]$ and $B=\C[X \times Y]$, the natural ring extension $\phi: A \to B$ is flat by Lemma \ref{lem:M-flat}. 
Now $\CH_{\star}(\C[Y]) \to \CH_{\star}(A)$ is an isomorphism by homotopy invariance, 
so it suffices to show $\phi^{\ast}:\CH_{\star}(A) \to \CH_{\star}(B)$ is an isomorphism. 

For $i\in\Z$, there is a commutative diagram of localization exact sequences, 
where the vertical pullback maps are induced by the flat map $\phi$:
\begin{equation}\label{eqn:Contr-1-2}
\xymatrix@C.5pc{
\CH_*(A[x^{-1}], i+1) \ar[r] \ar[d] & \CH_*(A/{(x)}, i) \ar[r] \ar[d] & 
\CH_*(A, i) \ar[r] \ar[d] & \CH_*(A[x^{-1}], i) \ar[r] \ar[d] & 
\CH_*(A/{(x)}, i-1) \ar[d] \\
\CH_*(B[x^{-1}], i+1) \ar[r] & \CH_*(B/{(x)}, i) \ar[r] & 
\CH_*(B, i) \ar[r] & \CH_*(B[x^{-1}], i) \ar[r] & 
\CH_*(B/{(x)}, i-1).}
\end{equation}

By identifying $A[x^{-1}] \to B[x^{-1}]$ with the ring extension $A[x^{-1}] \to A[x^{-1}][z,t]$, it follows that the leftmost and the second rightmost vertical 
maps in \eqref{eqn:Contr-1-2} are isomorphisms.

The map ${A}/{(x)} \to {B}/{(x)}$ coincides with the composite of the ring extensions
\begin{equation*}
\C[Y] \to \frac{\C[Y][z,t]}{\left(z^{\alpha_2} + t^{\alpha_3}\right)} 
\to
\left(\frac{\C[Y][z,t]}{\left(z^{\alpha_2} + t^{\alpha_3}\right)}\right)[y].
\end{equation*}
The first map induces an isomorphism of higher Chow groups by Proposition \ref{prop:Contr-0}, 
and likewise for the second map by homotopy invariance. 
It follows that the second vertical arrow from left and the rightmost vertical arrow 
in \eqref{eqn:Contr-1-2} are isomorphisms. 

We conclude using the 5-lemma that the middle vertical arrow in \eqref{eqn:Contr-1-2} is an isomorphism. 
\end{proof}

\begin{lem}\label{lem:Contr-2}
Let $a, b, c \ge 2$ be integers such that $(a,bc) =1$ and let $l>0$ be an integer.
Consider 
\begin{equation*}
\label{eqn:Contr-2-1}
A_l = \frac{\C[Y][x, z]}{\left((z^a + x^b)^l\right)} 
\text{ and the inclusion }
\phi: A_l \to \frac{A_l[t]}{(t^c-x)}.
\end{equation*}
Then $\phi$ is flat and the induced pullback map $\CH_{\star}(A_l) \to \CH_{\star}\left({A_l[t]}/{(t^c-x)}\right)$ is an isomorphism.
\end{lem}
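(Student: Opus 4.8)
\emph{Plan.} Flatness is immediate: $t^c-x$ is monic of degree $c$ in $t$ over $A_l$, so $B:=A_l[t]/(t^c-x)$ is free of rank $c$ over $A_l$ with basis $1,t,\dots,t^{c-1}$; hence $\phi$ is finite flat. Substituting $x=t^c$ identifies
\[
B \simeq \C[Y][z,t]/\bigl((z^a+t^{bc})^l\bigr),
\]
under which $\phi$ is given by $z\mapsto z$ and $x\mapsto t^c$. For the higher Chow statement the strategy is to reduce to the reduced rings and invoke Proposition~\ref{prop:Contr-0}.

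It is tempting to compare $A_l$ and $B$ separately to $\C[Y]$, but this is the wrong move: when $l>1$ the structure maps $\C[Y]\to A_l$ and $\C[Y]\to B$ are \emph{not} isomorphisms on higher Chow groups, because flat pullback along a fat point multiplies fundamental classes by its length (for instance the flat pullback $\CH_0(\Spec\C)\to\CH_0(\Spec\C[v]/(v^2))$ is multiplication by $2$). Instead I pass to the reduced rings. Since $(a,bc)=1$ forces $(a,b)=1$, the normalization argument of Proposition~\ref{prop:Contr-0} shows that $(z^a+x^b)$ and $(z^a+t^{bc})$ are radical ideals, so that $\bar A:=(A_l)_{\mathrm{red}}=\C[Y][x,z]/(z^a+x^b)$ and $\bar B:=B_{\mathrm{red}}=\C[Y][z,t]/(z^a+t^{bc})$.

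The key step is a base change. The map $\phi$ descends to $\bar\phi\colon\bar A\to\bar B$, $x\mapsto t^c$, and a direct computation gives $\bar B=\bar A[t]/(t^c-x)=B\otimes_{A_l}\bar A$ (the image of $z^a+x^b$ in $B$ is $z^a+t^{bc}$, which cuts out exactly $\bar B$). Thus the reduction closed immersions $\iota_A\colon\Spec\bar A\hookrightarrow\Spec A_l$ and $\iota_B\colon\Spec\bar B\hookrightarrow\Spec B$ fit into a cartesian square whose vertical maps $\Spec B\to\Spec A_l$ and $\Spec\bar B\to\Spec\bar A$ are finite flat. Since $\phi$ is flat the square is Tor-independent, so flat pullback commutes with proper pushforward (a standard property of Bloch's higher Chow groups):
\[
\phi^*\circ(\iota_A)_* = (\iota_B)_*\circ\bar\phi^*.
\]
By nil-invariance the pushforwards $(\iota_A)_*$ and $(\iota_B)_*$ are isomorphisms, so $\phi^*$ is an isomorphism exactly when $\bar\phi^*$ is.

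Finally, $\bar A$ and $\bar B$ are precisely the rings appearing in Proposition~\ref{prop:Contr-0}: with $(a,b)=1$ the structure map $\C[Y]\to\bar A$ induces an isomorphism on $\CH_\star$, and with $(a,bc)=1$ so does $\C[Y]\to\bar B$. As $\bar\phi$ is a $\C[Y]$-algebra homomorphism, functoriality of flat pullback writes $\bar\phi^*$ as the composite of one of these isomorphisms with the inverse of the other; hence $\bar\phi^*$, and therefore $\phi^*$, is an isomorphism. The main obstacle is precisely this reduction step: one must transport the isomorphism through the cartesian square rather than through $\C[Y]$, so that the length multiplicities created on the two sides cancel, and the computational heart is verifying that the square is genuinely cartesian, i.e.\ that the nilradical of $A_l$ extended to $B$ cuts out exactly $\bar B$.
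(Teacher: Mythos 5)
Your proof is correct and follows essentially the same route as the paper: flatness via the monic/coefficient criterion, reduction to the reduced ring $l=1$ by nil-invariance, and then comparison of both $\bar A$ and $\bar B\simeq \C[Y][z,t]/(z^a+t^{bc})$ to $\C[Y]$ via Proposition~\ref{prop:Contr-0} and two-out-of-three. The only difference is presentational: you spell out the cartesian square and the compatibility of flat pullback with the pushforward along the reduction (which the paper compresses into ``we may assume $l=1$''), and your remark that the direct flat pullback $\CH_\star(\C[Y])\to\CH_\star(A_l)$ fails to be an isomorphism for $l>1$ is a correct and worthwhile clarification of why that reduction is needed.
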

\begin{proof}
Lemma \ref{lem:M-flat} shows $\phi$ is flat.
We may assume $l=1$ since nilpotent invariance implies the pushforward maps 
$$
\CH_{\star}(A_l) \to \CH_{\star}(A_1) 
\text{ and }
\CH_{\star}\left(\frac{A_l[t]}{(t^c-x)}\right) \to\CH_{\star}\left(\frac{A_1[t]}{(t^c-x)}\right) 
$$
are isomorphisms. 
Proposition \ref{prop:Contr-0} implies the ring extension $\C[Y] \inj A_1$ induces an isomorphism $\CH_{\star}(\C[Y]) \xrightarrow{\simeq} \CH_{\star}(A_1)$.
Next we prove there is an isomorphism 
$$
\CH_{\star}(\C[Y]) \to \CH_{\star}\left(\frac{A_1[t]}{(t^c-x)}\right).
$$
The ring extension $\C[Y] \to {A_1[t]}/{(t^c-x)}$ coincides with $\C[Y] \to {\C[Y][x, z, t]}/{(x- t^c, x^b + z^a)}$, 
and the latter ring can be identified with ${\C[Y][z,t]}/{(z^a + t^{bc})}$. 
Using the assumption $(a, bc) = 1$, Proposition \ref{prop:Contr-0} furnishes the desired isomorphism  
$$
\CH_{\star}(\C[Y])
\overset{\simeq}{\to} 
\CH_{\star}\left(\frac{A_1[t]}{(t^c-x)}\right)\simeq \CH_{\star}\left(\frac{\C[Y][z,t]}{(z^a + t^{bc})}\right).
$$ 
\end{proof}

\begin{prop}
\label{prop:Contr-3}
Let $X$ be a Koras--Russell threefold of the second kind with coordinate ring
\begin{equation*}\label{eqn:Contr-3-1}
\C[X] = \frac{\C[x,y,z,t]}{\left(ax + (x^b+ z^{\alpha_2})^ly + t^{\alpha_3}
\right)},
\end{equation*}
where $l>0$ is an integer, $a\in {\C}^{\times}$, and $b, \alpha_2, \alpha_3 \ge 2$ are integers such that $(\alpha_2,b\alpha_3)=1$.
For $Y$ any smooth complex affine variety, 
the pullback map $\CH^{\star}(\C[Y]) \to \CH^{\star}(X \times Y)$ is an isomorphism.
\end{prop}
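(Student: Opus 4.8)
The plan is to follow the template of \propref{prop:Contr-1}, but to localize not at $x$ but at the element $w = x^b + z^{\alpha_2}$ that appears as the coefficient of $y$. Since $X$ and $Y$ are smooth, it is equivalent to prove that the flat pullback $\CH_{\star}(\C[Y]) \to \CH_{\star}(X \times Y)$ is an isomorphism. I would set $A = \C[Y][x,z]$ and $B = \C[X \times Y]$, and view the defining relation $f = ax + (x^b + z^{\alpha_2})^l y + t^{\alpha_3}$ as a polynomial in the variables $y,t$ over $A$. Its coefficients include the constant $1$ (the coefficient of $t^{\alpha_3}$), so \lemref{lem:M-flat} shows that the natural extension $\phi\colon A \to B$ is flat. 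Homotopy invariance gives $\CH_{\star}(\C[Y]) \xrightarrow{\simeq} \CH_{\star}(A)$, so it suffices to show that $\phi^{\ast}\colon \CH_{\star}(A) \to \CH_{\star}(B)$ is an isomorphism.

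The second step is to write the commutative ladder of localization exact sequences associated with the principal element $w$ in $A$ and in $B$, with vertical maps the flat pullbacks along $\phi$, exactly as in \eqref{eqn:Contr-1-2} but with the pairs $(A[x^{-1}],A/(x))$ and $(B[x^{-1}],B/(x))$ replaced by $(A[w^{-1}],A/(w))$ and $(B[w^{-1}],B/(w))$. When $w$ is inverted the relation $f=0$ can be solved for $y$, namely $y = -(ax + t^{\alpha_3})/w^l$, so that $B[w^{-1}] = A[w^{-1}][t]$ is a polynomial extension; hence the two vertical maps over the localized rings are isomorphisms by homotopy invariance. The whole argument then reduces to the maps over the vanishing locus of $w$.

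This is precisely where \lemref{lem:Contr-2} enters, and it is the heart of the matter. Setting $w=0$ kills $(x^b + z^{\alpha_2})^l y$ for every $l \ge 1$, so the relation collapses to $ax + t^{\alpha_3} = 0$, and one computes
\[
A/(w) = A_1 := \frac{\C[Y][x,z]}{(x^b + z^{\alpha_2})},
\qquad
B/(w) = \left(\frac{A_1[t]}{(ax + t^{\alpha_3})}\right)[y],
\]
with $y$ a free variable. A rescaling of $t$, legitimate because $a \in \C^{\times}$, converts $ax + t^{\alpha_3}$ into $t^{\alpha_3} - x$, so $A_1[t]/(ax + t^{\alpha_3})$ is exactly the ring occurring in \lemref{lem:Contr-2} with $l=1$ and with the exponents $(\alpha_2, b, \alpha_3)$ playing the roles of $(a,b,c)$ there; the hypothesis $(\alpha_2, b\alpha_3) = 1$ is precisely that lemma's coprimality condition $(a,bc)=1$. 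Thus $\CH_{\star}(A_1) \to \CH_{\star}(A_1[t]/(ax+t^{\alpha_3}))$ is an isomorphism by \lemref{lem:Contr-2}, and adjoining the free variable $y$ preserves this by homotopy invariance. The composite is $\CH_{\star}(A/(w)) \to \CH_{\star}(B/(w))$, which is therefore an isomorphism. Feeding this together with the localized isomorphisms into the ladder, the five-lemma shows $\phi^{\ast}$ is an isomorphism, and hence so is the original pullback.

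The routine ingredients, namely flatness, exactness of the two localization rows, and nil-invariance to disregard any non-reduced scheme structure on the vanishing loci, go through just as in \propref{prop:Contr-1}. The one genuinely new input is recognizing that, after killing $w$, the variable $t$ attaches to $A_1$ in exactly the way packaged by \lemref{lem:Contr-2}. The main point to get right is the bookkeeping that the coefficient of $y$ is $w^l$ rather than $w$, so that it vanishes on $V(w)$ for all $l \ge 1$, together with the verification that the change of variables absorbing $a$ and producing $t^{\alpha_3}-x$ respects the coprimality hypothesis $(\alpha_2, b\alpha_3)=1$.
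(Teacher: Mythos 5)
Your proposal is correct and follows essentially the same route as the paper's proof: the same flatness reduction, the same localization ladder with the five-lemma, and the same appeal to Lemma~\ref{lem:Contr-2} on the closed stratum. The only cosmetic differences are that you localize at $w=x^b+z^{\alpha_2}$ rather than at $u=w^l$ (which changes nothing, by nil-invariance) and that you absorb the unit $a$ by rescaling $t$ at the end instead of normalizing $a=-1$ at the outset.
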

\begin{proof}
Since $X$ and $Y$ are smooth, 
it is equivalent to show that the flat pullback map $\CH_{\star}(\C[Y]) \to \CH_{\star}(X \times Y)$ is an isomorphism.
We may assume $a=-1$. 
Setting $A = \C[Y][x, z]$ and $B = \C[X \times Y]$, the natural map $\phi: A \to B$ is flat by Lemma \ref{lem:M-flat}.
Moreover, 
$\CH_{\star}(\C[Y]) \to \CH_{\star}(A)$ is an isomorphism by homotopy invariance.
It remains to show $\phi^{\ast}:\CH_{\star}(A) \to \CH_{\star}(B)$ is an isomorphism. 

We set $u = (x^b+ z^{\alpha_2})^l \in A$ and consider for $i\in\Z$ the commutative diagram of exact localization sequences with 
vertical pullback maps induced by the flat map $\phi$:
\begin{equation}\label{eqn:Contr-3-2}
\xymatrix@C.5pc{
\CH_*(A[u^{-1}], i+1) \ar[r] \ar[d] & \CH_*(A/{(u)}, i) \ar[r] \ar[d] & 
\CH_*(A, i) \ar[r] \ar[d] & \CH_*(A[u^{-1}], i) \ar[r] \ar[d] & 
\CH_*(A/{(u)}, i-1) \ar[d] \\
\CH_*(B[u^{-1}], i+1) \ar[r] & \CH_*(B/{(u)}, i) \ar[r] & 
\CH_*(B, i) \ar[r] & \CH_*(B[u^{-1}], i) \ar[r] & 
\CH_*(B/{(u)}, i-1).}
\end{equation}

The  ring extension $A[u^{-1}] \to B[u^{-1}]$ coincides with $A[u^{-1}] \to A[u^{-1}][t]$. 
Hence the leftmost and the second rightmost vertical arrows in \eqref{eqn:Contr-3-2} are isomorphisms by homotopy invariance.

The ring extension ${A}/{(u)} \to {B}/{(u)}$ coincides with $A' \to \frac{A'[t,y]}{(t^{\alpha_3}-x)}$, 
where $A'= \frac{\C[Y][x, z]}{\left((z^{\alpha_2} + x^b)^l\right)}$.
There exist induced pullback maps
\[
\CH_{\star}(A') 
\to 
\CH_{\star}\left(\frac{A'[t]}{(t^{\alpha_3}-x)}\right) 
\to
\CH_{\star}\left(\frac{A'[t,y]}{(t^{\alpha_3}-x)}\right).
\]
Here, 
the first map is an isomorphism by Lemma \ref{lem:Contr-2} and the second by homotopy invariance.
Hence the second leftmost and the rightmost vertical maps in \eqref{eqn:Contr-3-2} are isomorphisms. 

Using the 5-lemma we conclude the middle vertical arrow \eqref{eqn:Contr-3-2} is an isomorphism.  
\end{proof}

Combining Propositions \ref{prop:Contr-1} and \ref{prop:Contr-3} with the isomorphism between higher Chow groups and motivic cohomology, 
as shown by Voevodsky \cite[Corollary~2]{Voev}, 
we obtain the following.
\begin{thm}\label{thm:MC-Vanish}
Let $X$ be a Koras--Russell threefold of the first or second kind, 
and let $Y$ be a smooth complex affine variety. 
Then the pullback map $H^{*, *}(Y,\Z) \to H^{*, *}(X \times Y,\Z)$ induced by the projection $X \times Y \to Y$
is an isomorphism of (bigraded) integral motivic cohomology rings.
\end{thm}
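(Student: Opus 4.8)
The plan is to treat the theorem as a packaging of the two higher Chow group computations already established, transported across Voevodsky's comparison isomorphism. The hypothesis presents exactly two cases, matching \propref{prop:Contr-1} and \propref{prop:Contr-3}. First I would observe that if $X$ is of the first kind, then \propref{prop:Contr-1} shows the flat pullback $\CH^\star(Y) \to \CH^\star(X \times Y)$ induced by the projection is an isomorphism, while if $X$ is of the second kind, \propref{prop:Contr-3} gives the identical conclusion. Thus the higher Chow group version of the assertion holds in every bidegree, with no further geometric input required. (For completeness one may record that the excluded trivial case $\epsilon_X = 0$ yields $X \cong \A^3_\C$, for which the claim is immediate from homotopy invariance of higher Chow groups.)

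The remaining task is to rewrite this in motivic cohomology. Here I would invoke the isomorphism between higher Chow groups and motivic cohomology, as shown by Voevodsky, which furnishes for each smooth complex variety $W$ a natural identification $H^{p,q}(W,\Z) \cong \CH^q(W, 2q-p)$. Applying this to the projection $X \times Y \to Y$ and using that the comparison is functorial for pullback, the square relating the motivic and higher Chow pullback maps commutes; hence the isomorphism supplied by \propref{prop:Contr-1} or \propref{prop:Contr-3} transports to an isomorphism $H^{*,*}(Y,\Z) \to H^{*,*}(X \times Y,\Z)$ in each bidegree. Finally I would upgrade this degreewise bijection to a ring isomorphism: the pullback along a scheme morphism is multiplicative for the cup product, and Voevodsky's comparison carries the intersection product on higher Chow groups to the cup product, so the bijection is automatically a homomorphism of bigraded rings.

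The step demanding the most care is not any individual computation — those are absorbed into \propref{prop:Contr-1} and \propref{prop:Contr-3} — but the bookkeeping surrounding the comparison: one must verify that the reindexing $\CH^q(-,2q-p) \leftrightarrow H^{p,q}$ is simultaneously compatible with the projection pullback and with products, so that a statement proved at the level of abelian groups genuinely promotes to an isomorphism of bigraded rings. This is precisely where the argument leans on the naturality and multiplicativity built into Voevodsky's theorem rather than on any property of $X$ itself, and it is the only place where the proof does more than cite the preceding propositions.
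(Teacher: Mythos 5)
Your proposal is correct and follows essentially the same route as the paper, which obtains the theorem precisely by combining \propref{prop:Contr-1} and \propref{prop:Contr-3} with Voevodsky's comparison isomorphism between higher Chow groups and motivic cohomology \cite[Corollary~2]{Voev}. Your additional remarks on the naturality and multiplicativity of the comparison make explicit the bookkeeping the paper leaves implicit, but introduce no new ideas or gaps.
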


An immediate consequence of \thmref{thm:MC-Vanish} is that
$\CH^{\ge 1}(X) = 0$ for a Koras--Russell threefold of the first or second kind.
This result has the following application to the vector bundles on
such threefolds.

\begin{cor}\label{cor:trivial-bundle}
Let $X$ be a Koras--Russell threefold of the first or second kind. 
Then every vector bundle on $X$ is trivial.
\end{cor}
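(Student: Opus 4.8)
The plan is to deduce triviality from the vanishing of all positive-codimensional Chow groups of $X$, combined with the classical structure theory of projective modules over smooth affine algebras of dimension three over an algebraically closed field. I would treat a vector bundle $E$ on $X$ as a finitely generated projective module over $A=\C[X]$ and run a descending induction on $\operatorname{rank} E$, feeding the Chow vanishing into splitting criteria of Murthy and of Mohan Kumar--Murthy.

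First I would record the input. Applying \thmref{thm:MC-Vanish} with $Y=\Spec\C$ and using the identification of motivic cohomology in bidegree $(2j,j)$ with the Chow group $\CH^j$, we obtain $\CH^j(X)=0$ for every $j\ge 1$, as already noted in the remark following that theorem. Since $X$ is a smooth affine threefold over the algebraically closed field $\C$, this reads as $\Pic(X)=\CH^1(X)=0$, as $\CH^2(X)=0$, and as $\CH_0(X)=\CH^3(X)=0$. In particular every Chern class $c_j(E)$ of a vector bundle $E$ on $X$ lives in a trivial group.

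Next I would carry out the induction on the rank $r$. For $r>3$ Serre's splitting theorem supplies a unimodular element, so $E\cong E'\oplus A$, and iterating reduces to $r\le 3$. For $r=3$ the top Chern class $c_3(E)\in\CH_0(X)$ vanishes, so by Murthy's theorem identifying the top Chern class as the complete obstruction to splitting off a free summand over a smooth affine variety of dimension equal to the rank over an algebraically closed field \cite{Murthy}, we again obtain $E\cong E'\oplus A$ with $\operatorname{rank}E'=2$. For $r=1$ the bundle is a line bundle, hence trivial because $\Pic(X)=0$. This leaves the rank two case, which I expect to be the crux: here $c_1(E)\in\Pic(X)$ and $c_2(E)\in\CH^2(X)$ both vanish, and I would invoke the classification of rank two bundles on a smooth affine threefold over an algebraically closed field due to Mohan Kumar and Murthy, according to which $(c_1,c_2)$ is a complete invariant; vanishing of both Chern classes then forces $E\cong A^2$. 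Assembling the four cases completes the induction and shows that every vector bundle on $X$ is trivial.

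The main obstacle is precisely the rank two step. The Chow vanishing only delivers the vanishing of the Chern classes, and it is the deeper theorem of Mohan Kumar and Murthy (together with, for the passage from rank three to rank two, Murthy's top Chern class theorem) that converts this numerical information into an honest splitting. An alternative organization would first establish $\widetilde{K}_0(X)=0$ from the Chow vanishing via the $\gamma$-filtration, using the isomorphisms $F^1/F^2\cong\Pic(X)$, $F^2/F^3\cong\CH^2(X)$, and $F^3\cong\CH_0(X)$ valid for smooth affine threefolds over an algebraically closed field, and then combine stable freeness with Suslin's cancellation theorem for modules of rank at least three. This route, however, still bottoms out at the same rank two phenomenon once the rank drops below the dimension, so I would present the Chern class argument above as the cleaner path.
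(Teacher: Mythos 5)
Your argument is the paper's argument: Serre splitting down to rank three, the vanishing of $\CH^3(X)$, $\CH^2(X)$, and $\Pic(X)=\CH^1(X)$ supplied by \thmref{thm:MC-Vanish}, and then splitting criteria that convert Chern-class vanishing into actual free summands in ranks three, two, and one. The one point to correct is the provenance of the rank-two step, which you rightly single out as the crux: the statement that a rank-two bundle on a smooth affine threefold over $\C$ is determined by $(c_1,c_2)$ (equivalently, splits off a free summand once $c_2$ vanishes) is \emph{not} in Mohan Kumar--Murthy \cite{KM}; their 1982 paper gives the rank-three/top Chern class criterion that the paper cites as \cite[Corollary~2.4]{KM}, while the rank-two question remained open for three decades and is precisely the theorem of Asok and Fasel \cite[Corollary~2]{AF} that the paper invokes. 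As written, your rank-two step rests on a result that the cited source does not contain, so you should replace that citation; with \cite{AF} in its place the proof is complete and coincides with the paper's.
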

\begin{proof}
Serre showed that every vector bundle on $X$ of rank $\geq 4$ is a direct sum of a rank three vector bundle and a trivial bundle. 
Now $\CH^3(X)=0$,
see \thmref{thm:MC-Vanish}, 
so by \cite[Corollary~2.4]{KM} every vector bundle of rank three is a direct sum of a rank two vector bundle and the trivial line bundle.
Since $\CH^2(X) = 0$ by \thmref{thm:MC-Vanish}, 
it follows from \cite[Corollary~2]{AF} that every vector bundle of rank two is a direct sum of a line bundle and the trivial line bundle. 
Finally, 
as $\Pic(X) \simeq \CH^1(X) =0$,
every line bundle over $X$ is trivial. 
\end{proof}
\begin{remk}
\label{remk:Murthy}
Triviality of vector bundles on Koras--Russell threefolds of the first kind was shown by Murthy \cite[Corollary~3.8]{Murthy} by a completely different method. 
\end{remk}

\section{$\A^1$-contractibility of Koras--Russell threefolds}
\label{sect:STBCONT}
In this section, we prove our main results on $\A^1$-contractibility of Koras--Russell threefolds using \thmref{thm:MC-Vanish} and $\A^1$-homotopy theory 
(see, e.g., \cite{DLORV}, \cite{MV}).

The pointed unstable and stable $\A^1$-homotopy categories $\H_\bullet(\C)$ and $\SH(\C)$ are the homotopy categories of simplicial model categories 
$\Spc_\bullet(\C)$, respectively $\Spt(\C)$.
Recall that the objects of $\Spc_\bullet(\C)$ are pointed simplicial presheaves on $\Sm_\C$,
the Nisnevich site of smooth complex varieties of finite type, 
while $\Spt(\C)$ is comprised of $T$-spectra in $\Spc_\bullet(\C)$ for the Tate object $T=(\C\P^1,\infty)$.

We note that $\SH(\C)$ is a triangulated category with shift functor $E\mapsto E[1]$ given by smashing with the topological circle. 
Denote by $\Sigma^\infty_T (X,x)\in\SH(\C)$ the $(\C\P^1,\infty)$-suspension spectrum of $X\in\Sm_\C$ and a rational point $x\in X(\C)$.
For fixed $F\in\SH(\C)$, 
recall from \cite{RO} that $E\in\SH(\C)$ is called 
\begin{itemize}
\item \emph{$F$-acyclic} if $E\wedge F\simeq \ast$;
\item \emph{$F$-local} if $\Hom_{\SH(\C)}(D,E)=0$ for every $F$-acyclic spectrum $D$.
\end{itemize}
It is clear that the $F$-local spectra form a colocalizing subcategory of $\SH(\C)$. 
Note that if $F$ is a ring spectrum, 
then any $F$-module $E$ is $F$-local (every map $D\to E$ factors through $D\wedge F$ and hence it is trivial if $D$ is $F$-acyclic).

Let $\MZ\in\SH(\C)$ denote the ring spectrum that represents motivic cohomology,
i.e.,  
for every $X\in\Sm_\C$ and integers $n,i\in\Z$ there is an isomorphism 
\begin{equation}
\label{eqn:motivic-coh-SH}
H^{n,i}(X,\Z)\simeq 
\Hom_{\SH(\C)}(\Sigma^\infty_T X_+,\MZ(i)[n]).
\end{equation}
Here, 
for $E\in\SH(\C)$, 
the Tate twist $E(1)$ is defined by $E(1)=E\wedge\Sigma^\infty_T(\G_m,1)[-1]$.

\begin{lem}
\label{lem:HZ-local}
For every $X\in\Sm_\C$ and closed point $x\in X$, 
$\Sigma^\infty_T (X,x)\in\SH(\C)$ is $\MZ$-local.
\end{lem}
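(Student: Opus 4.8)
The plan is to exploit the slice filtration together with the theorem of Levine and Voevodsky identifying the zeroth slice of the motivic sphere spectrum over $\C$. Since a closed point of a smooth $\C$-variety is rational, $\Sigma^\infty_T(X,x)$ is defined and is a retract of $\Sigma^\infty_T X_+$; both are effective, so it suffices to prove that an arbitrary effective spectrum $E$ with convergent slice filtration is $\MZ$-local, applied to $E=\Sigma^\infty_T(X,x)$. The two structural inputs I would isolate first are: (a) the class of $\MZ$-local spectra is a colocalizing triangulated subcategory of $\SH(\C)$, hence closed under shifts, cofiber sequences, products, and homotopy limits of towers --- the last via the presentation of a $\holim$ as the fiber of a self-map of a product; and (b) every $\MZ$-module is $\MZ$-local, as already observed in the text because $\MZ$ is a ring spectrum.

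Next I would bring in the slice tower $\cdots\to f_{q+1}E\to f_qE\to\cdots$ with slices $s_qE=\mathrm{cofib}(f_{q+1}E\to f_qE)$. Over $\C$ one has $s_0(\1)\simeq\MZ$ by Levine \cite{Levine} and Voevodsky \cite{VoevSlice}, and the associated graded of the slice tower is lax monoidal over the commutative monoid $s_0(\1)$, so each slice $s_qE$ carries a natural $s_0(\1)\simeq\MZ$-module structure; by (b) each $s_qE$ is therefore $\MZ$-local. Since $E$ is effective we have $f_0E\simeq E$, and for each $n$ the truncation $E/f_nE=\mathrm{cofib}(f_nE\to E)$ is a finite iterated extension of the slices $s_0E,\dots,s_{n-1}E$ coming from the tower $f_nE\to f_{n-1}E\to\cdots\to f_0E$. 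Applying the closure under cofiber sequences in (a) repeatedly, each $E/f_nE$ is $\MZ$-local.

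Finally I would invoke convergence of the slice filtration over $\C$ (Levine), which for an effective spectrum yields $\holim_n f_nE\simeq\ast$ and hence $E\simeq\holim_n E/f_nE$. As a homotopy limit of the $\MZ$-local objects $E/f_nE$, the spectrum $E$ is $\MZ$-local by (a), which completes the argument.

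The hard part is not the formal bookkeeping but the two deep external inputs: the identification $s_0(\1_\C)\simeq\MZ$ together with the resulting $\MZ$-module structure on every slice, and the convergence $\holim_n f_nE\simeq\ast$ for effective $E$, both genuinely nontrivial theorems of Levine and Voevodsky. Granting these, $\MZ$-locality reduces to the elementary closure properties in (a). A secondary point to check carefully is that the homotopy limit of the truncation tower recovers $E$ on the nose rather than some completion of it --- this is exactly where honest convergence, as opposed to mere slice-completeness, is needed.
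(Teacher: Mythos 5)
Your overall strategy coincides with the paper's: slices are modules over $s_0(\1)\simeq\MZ$ and hence $\MZ$-local, the truncations $E/f_qE$ are then $\MZ$-local by induction on the triangles relating consecutive truncations to slices, and one concludes by convergence together with the fact that the $\MZ$-local spectra form a colocalizing subcategory. All of that bookkeeping is fine and matches the argument in the text.

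The gap is in your final input: you invoke ``convergence of the slice filtration over $\C$ (Levine)'' in the form $\holim_q f_qE\simeq\ast$ \emph{for an arbitrary effective spectrum} $E$. That is not what Levine proves, and it is not a known (nor expected) statement at this level of generality; slice-completeness genuinely requires hypotheses on $E$ beyond effectivity. Levine's completeness theorem, as used here, applies to $\Sigma^\infty_T Y_+$ for $Y$ a \emph{smooth projective} variety. The paper therefore inserts a reduction step that your argument is missing: by resolution of singularities, $\Sigma^\infty_T(X,x)$ lies in the smallest thick subcategory of $\SH(\C)$ generated by the $\Sigma^\infty_T Y_+$ with $Y$ smooth projective (Riou; R\"ondigs--{\O}stv{\ae}r), and since the $\MZ$-local spectra form a thick (indeed colocalizing) subcategory, it suffices to treat such $Y$, for which Levine's theorem gives $\holim_q f_q(\Sigma^\infty_T Y_+)\simeq\ast$. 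Your remark about retracts of $\Sigma^\infty_T X_+$ does not substitute for this, because $X$ itself need not be projective. To repair the proof, either add this reduction or supply a convergence statement valid for $\Sigma^\infty_T X_+$ with $X$ an arbitrary smooth variety; the blanket claim for effective spectra should be withdrawn. Your closing caveat --- that one needs honest convergence rather than slice-completeness of some completion --- is exactly the right worry, and it is precisely the point that the restriction to smooth projective $Y$ is there to address.
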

\begin{proof}
By resolution of singularities, 
$\Sigma^\infty_T(X,x)$ is in the smallest thick subcategory of $\SH(\C)$ containing $\Sigma^\infty_T Y_+$ for any smooth projective variety $Y$
\cite[Theorem 1.4]{Riou}, 
\cite[Theorem 52]{RO:mz}.
It suffices now to show that $\Sigma^\infty_T Y_+$ is $\MZ$-local for such $Y$.
Our proof employs Voevodsky's slice filtration on $\SH(\C)$ \cite{VoevOpen}.
Recall the slice filtration of $E\in\SH(\C)$ is a tower of spectra
\[
\dotsb \to f_{q+1}E \to f_qE \to f_{q-1}E \to \dotsb \to E,
\quad q\in\Z.
\]
The $q$th slice $s_qE$ of $E$ is defined by the distinguished triangle
\[
f_{q+1}E \to f_qE \to s_qE \to f_{q+1}E[1].
\]
By Levine \cite[Theorem~3]{Levine}, 
the slice filtration of $\Sigma^\infty_TY_+$  for $Y$  a smooth projective variety is complete, 
i.e.,
\[
\holim_{q\to\infty} f_q(\Sigma^\infty_T Y_+)
\simeq 
\ast.
\]
Equivalently, 
if we define $c_qE$ by the distinguished triangle $f_qE \to E \to c_qE \to f_qE[1]$,
then
\[
\Sigma^\infty_T Y_+\simeq \holim_{q\to \infty} c_q(\Sigma^\infty_T Y_+).
\]
Since the subcategory of $\MZ$-local spectra is colocalizing, 
it now suffices to prove that $c_q(\Sigma^\infty_T Y_+)$ is $\MZ$-local for every $q\in \Z$.
By definition of the slice filtration, we have $c_q(\Sigma^\infty_T Y_+)\simeq\ast$ for $q\le 0$. 
Using the distinguished triangles
\[
s_qE \to c_qE \to c_{q-1}E \to s_qE[1]
\]
and induction on $q$, 
we are reduced to proving the slices $s_q(\Sigma^\infty_T Y_+)$ are $\MZ$-local. 
In fact, 
all slices in $\SH(\C)$ are $\MZ$-local: 
by \cite[\S6 (iv),(v)]{GRSO}, \cite[Theorem 3.6.13(6)]{Pelaez}, \cite[\S3]{Spitzweck}, 
any slice $s_qE$ is a module over the zeroth slice $s_0(\1)$ of the sphere spectrum, 
and hence it is $s_0(\1)$-local.
Moreover, 
$s_0(\1)\simeq\MZ$, as shown in \cite[Theorem~6.6]{VoevSlice}.
\end{proof}

\begin{thm}
\label{thm:Main-KR}
Let $X$ be a Koras--Russell threefold of the first or second kind. 
Then there exists an integer $n \ge 0$ such that $\Sigma^n_T(X, 0)$ is $\A^1$-contractible.
\end{thm}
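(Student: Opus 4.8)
The plan is to show that the reduced suspension spectrum $E:=\Sigma^\infty_T(X,0)$ is already contractible in $\SH(\C)$, and only afterwards to descend this to a finite unstable suspension. By \lemref{lem:HZ-local}, $E$ is $\MZ$-local, so by the formalism recalled above it suffices to prove that $E$ is also $\MZ$-acyclic: once $E$ is simultaneously $\MZ$-local and $\MZ$-acyclic, the object $E$ is itself an $\MZ$-acyclic spectrum, and locality applied to $D=E$ forces $\Hom_{\SH(\C)}(E,E)=0$, whence $\mathrm{id}_E=0$ and $E\simeq\ast$. The entire problem therefore reduces to upgrading the cohomological vanishing of \thmref{thm:MC-Vanish} into the homological statement $E\wedge\MZ\simeq\ast$.

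The first ingredient I would record is that $E$ is strongly dualizable in $\SH(\C)$. Indeed, by the argument proving \lemref{lem:HZ-local}, $E$ lies in the thick subcategory generated by the spectra $\Sigma^\infty_T Y_+$ with $Y$ smooth projective; these are strongly dualizable by Atiyah duality, and the strongly dualizable objects form a thick subcategory of any tensor triangulated category, so $E$ is dualizable. Writing $E^\vee$ for its dual and $\underline{\Hom}$ for the internal function spectrum, one has $\underline{\Hom}(E,\MZ)\simeq E^\vee\wedge\MZ$, and $E\wedge\MZ$ and $E^\vee\wedge\MZ$ are mutually $\MZ$-linear duals; in particular one of them is contractible if and only if the other is.

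Next I would test $\underline{\Hom}(E,\MZ)$ against the compact generators $\Sigma^\infty_T Y_+(q)[p]$ of $\SH(\C)$, where $Y$ ranges over smooth complex affine varieties and $p,q\in\Z$. By the tensor--hom adjunction, $\Hom_{\SH(\C)}(\Sigma^\infty_T Y_+(q)[p],\underline{\Hom}(E,\MZ))\simeq\Hom_{\SH(\C)}(\Sigma^\infty_T Y_+\wedge E,\MZ(-q)[-p])$. Now $\Sigma^\infty_T Y_+\wedge E=\Sigma^\infty_T Y_+\wedge\Sigma^\infty_T(X,0)$ is the cofiber of the section $\Sigma^\infty_T Y_+\to\Sigma^\infty_T(X\times Y)_+$ induced by $Y\cong\{0\}\times Y\hookrightarrow X\times Y$, so via the representability isomorphism \eqref{eqn:motivic-coh-SH} this group is the relative motivic cohomology of $X\times Y$ modulo $Y$. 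It vanishes for all $p,q$ precisely because \thmref{thm:MC-Vanish} says that the projection induces an isomorphism $H^{*,*}(Y,\Z)\xrightarrow{\simeq}H^{*,*}(X\times Y,\Z)$. Since the objects $\Sigma^\infty_T Y_+(q)[p]$ generate $\SH(\C)$, I conclude $E^\vee\wedge\MZ\simeq\underline{\Hom}(E,\MZ)\simeq\ast$, and dualizing over $\MZ$ gives $E\wedge\MZ\simeq\ast$. Thus $E$ is $\MZ$-acyclic, and by the first paragraph $\Sigma^\infty_T(X,0)\simeq\ast$ in $\SH(\C)$.

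Finally, to pass from stable to finite unstable contractibility I would use that $(X,0)$ is a compact object, so that the canonical map $\colim_n[\Sigma^n_T(X,0),\Sigma^n_T(X,0)]_{\H_\bullet(\C)}\to\Hom_{\SH(\C)}(\Sigma^\infty_T(X,0),\Sigma^\infty_T(X,0))$ carries each $\mathrm{id}_{\Sigma^n_T(X,0)}$ to the identity of the suspension spectrum; as the latter is null, the identity becomes null at some finite stage $n$, which is exactly the assertion that $\Sigma^n_T(X,0)$ is $\A^1$-contractible. The main obstacle is the middle step, namely converting the purely cohomological input of \thmref{thm:MC-Vanish} into $\MZ$-acyclicity: this is where both the strong dualizability of $E$ (needed to trade cohomology for homology) and the full force of \thmref{thm:MC-Vanish} over all smooth affine $Y$ (needed to kill maps out of every compact generator, not merely the Tate twists of the unit) are indispensable, since testing against the unit alone cannot detect the vanishing of a general motive.
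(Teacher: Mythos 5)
Your proposal is correct and follows essentially the same route as the paper: use Lemma~\ref{lem:HZ-local} for $\MZ$-locality, test $\sHom(\Sigma^\infty_T(X,0),\MZ)$ against the generators $\Sigma^\infty_T Y_+(q)[p]$ with $Y$ smooth affine to extract the content of Theorem~\ref{thm:MC-Vanish}, convert this to $\MZ$-acyclicity via strong dualizability, conclude $\Sigma^\infty_T(X,0)\simeq\ast$, and descend to a finite suspension by compactness. The only cosmetic differences are that the paper first shows $\MZ\to\sHom(\Sigma^\infty_T X_+,\MZ)$ is an isomorphism and then splits off the basepoint, and it deduces $\Sigma^\infty_T(X,0)\wedge\MZ\simeq\ast$ by a Yoneda argument in $\SH(\C)$ rather than by dualizing over $\MZ$; both variants are valid.
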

\begin{proof}
We first reformulate Theorem \ref{thm:MC-Vanish} as an equivalence in the stable $\A^1$-homotopy category $\SH(\C)$, 
using its structure of a closed symmetric monoidal category. 
Denote by $\sHom(E,F)$ the internal homomorphism objects of $\SH(\C)$ characterized by the adjunction isomorphism
\[
\Hom_{\SH(\C)}(D,\sHom(E,F))
\simeq 
\Hom_{\SH(\C)}(D\wedge E,F).
\]
The structure map $X\to \Spec(\C)$ induces a morphism in $\SH(\C)$
\[
\MZ
\simeq 
\sHom(\Sigma^\infty_T \Spec(\C)_+,\MZ)
\to
\sHom(\Sigma^\infty_TX_+,\MZ).
\]
In view of \eqref{eqn:motivic-coh-SH}, 
Theorem \ref{thm:MC-Vanish} asserts that for every smooth complex affine variety $Y$ and $n,i\in\Z$, 
there is an induced isomorphism
\[
\Hom_{\SH(\C)}(\Sigma^\infty_TY_+(i)[n],\MZ)
\to
\Hom_{\SH(\C)}(\Sigma^\infty_TY_+(i)[n], \sHom(\Sigma^\infty_T X_+,\MZ)).
\]
The objects $\Sigma^\infty_T Y_+(i)[n]$ form a family of generators of $\SH(\C)$,
because
every smooth variety admits an open covering by smooth affine varieties.
We deduce that $\MZ\to \sHom(\Sigma^\infty_TX_+,\MZ)$ is an isomorphism.
Its retraction $\sHom(\Sigma^\infty_T X_+,\MZ)\to\MZ$ induced by the base point $0\in X$ is therefore also an isomorphism.
From the distinguished triangle
\[
\MZ[-1]
\to
\sHom(\Sigma^\infty_T (X,0),\MZ)
\to
\sHom(\Sigma^\infty_T X_+,\MZ)
\to
\MZ, 
\]
we deduce that $\sHom(\Sigma^\infty_T (X,0),\MZ)\simeq\ast$. 
By \cite[Theorems 1.4 and 2.2]{Riou} or \cite[Theorem 52]{RO:mz}, 
$\Sigma^\infty_T (X,0)$ is strongly dualizable in $\SH(\C)$, 
so that
\[
\sHom(\Sigma^\infty_T (X,0),\MZ)
\simeq
\sHom(\Sigma^\infty_T (X,0),\1)\wedge\MZ.
\]
Thus $\sHom(\Sigma^\infty_T (X,0),\1)$ is $\MZ$-acyclic, 
and for any $E\in\SH(\C)$ we obtain
\[
\Hom_{\SH(\C)}(E,\Sigma^\infty_T(X,0)\wedge\MZ)
\simeq
\Hom_{\SH(\C)}(E\wedge \sHom(\Sigma^\infty_T (X,0),\1),\MZ)
\simeq
\ast,
\]
since $E\wedge \sHom(\Sigma^\infty_T(X,0),\1)$ is $\MZ$-acyclic and $\MZ$ is $\MZ$-local (being an $\MZ$-module). 
By the Yoneda lemma, this implies
\[
\Sigma^\infty_T(X,0)\wedge\MZ\simeq\ast,
\]
i.e., $\Sigma^\infty_T(X,0)$ is $\MZ$-acyclic. 
On the other hand, 
by Lemma \ref{lem:HZ-local}, 
$\Sigma^\infty_T(X,0)$ is $\MZ$-local.
It follows that every endomorphism of $\Sigma^\infty_T(X,0)$ is trivial, 
and hence $\Sigma^\infty_T(X,0)\simeq\ast$. 
The proof is completed by the following lemma.
\end{proof}

\begin{lem}
\label{lem:compactness}
Let $X$ be a smooth complex variety and $x\in X$ a closed point.
If $\Sigma^\infty_T(X,x)\simeq\ast$ in $\SH(\C)$, 
then there exists an integer $n\geq 0$ such that $\Sigma^n_T(X,x)$ is $\A^1$-contractible.
\end{lem}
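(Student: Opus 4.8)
The plan is to convert the hypothesis $\Sigma^\infty_T(X,x)\simeq\ast$ into the vanishing of the identity endomorphism at a finite level of the $T$-suspension tower, using that $\Sigma^\infty_T(X,x)$ is a compact object of $\SH(\C)$. The guiding principle is the motivic analogue of the Spanier--Whitehead identity $\{A,B\}=\colim_n[\Sigma^nA,\Sigma^nB]$ for finite complexes: stable homotopy classes between suspension spectra of compact spaces form a filtered colimit of unstable homotopy classes.

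First I would record that $\Sigma^\infty_T(X,x)$ is compact. Since $\C$ is algebraically closed the closed point $x$ is $\C$-rational, so it determines a section $\Spec(\C)\to X$ and hence a genuine basepoint of $X$ in $\Spc_\bullet(\C)$. The standard splitting of a pointed space then gives
\[
\Sigma^\infty_T X_+ \simeq \1 \oplus \Sigma^\infty_T(X,x)
\]
in $\SH(\C)$. As $X$ is smooth of finite type, $\Sigma^\infty_T X_+$ is compact, and therefore so is its direct summand $\Sigma^\infty_T(X,x)$, compact objects being closed under retracts.

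Next I would invoke the colimit description of stable self-maps of a compact suspension spectrum. By construction of $\SH(\C)$ as the $T$-stabilization of $\H_\bullet(\C)$, for a pointed space $Z$ with $\Sigma^\infty_T Z$ compact there is a natural identification
\[
\Hom_{\SH(\C)}(\Sigma^\infty_T Z,\Sigma^\infty_T Z)\;\cong\;\colim_n\,[\Sigma^n_T Z,\Sigma^n_T Z]_{\H_\bullet(\C)},
\]
the transition maps being induced by $T$-suspension and the colimit commuting with $\Hom$ precisely because $\Sigma^\infty_T Z$ is compact. Taking $Z=(X,x)$, the unstable identities $\mathrm{id}_{\Sigma^n_T(X,x)}$ form a compatible system representing $\mathrm{id}_{\Sigma^\infty_T(X,x)}$, since $T$-suspension carries $\mathrm{id}_{\Sigma^n_T(X,x)}$ to $\mathrm{id}_{\Sigma^{n+1}_T(X,x)}$.

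Finally I would run the filtered colimit argument and conclude. The hypothesis forces $\mathrm{id}_{\Sigma^\infty_T(X,x)}=0$, so the class of $\mathrm{id}_{\Sigma^n_T(X,x)}$ becomes zero in the colimit above; as the colimit is filtered, the identity must already vanish at a finite stage, yielding an integer $N\geq 0$ with $\mathrm{id}_{\Sigma^N_T(X,x)}=0$ in $[\Sigma^N_T(X,x),\Sigma^N_T(X,x)]_{\H_\bullet(\C)}$. In a pointed homotopy category an object whose identity is null-homotopic is equivalent to its basepoint, so $\Sigma^N_T(X,x)$ is $\A^1$-contractible, as desired. The main obstacle is the colimit formula of the third paragraph: this is exactly where compactness of $(X,x)$ is indispensable, and justifying it rigorously amounts, via the model-categorical construction of $\SH(\C)$, to the fact that filtered colimits commute with mapping out of compact objects together with the symmetry properties of $T=(\C\P^1,\infty)$ over a field that make the naive and genuine stabilizations agree on homotopy classes.
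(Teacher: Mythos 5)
Your proposal is correct and takes essentially the same route as the paper: the paper's proof is precisely a model-categorical verification of your colimit formula (via levelwise fibrant replacements $E_n$ of $\Sigma^n_T(X,x)$, the observation that filtered colimits in $\Spc_\bullet(\C)$ preserve fibrant objects, and the $\omega$-compactness of a cofibrant model of $(X,x)$), followed by the same conclusion that since $\pi_0$ preserves filtered colimits the identity must already be null-homotopic at some finite stage. The one imprecision is that the smallness actually needed for the colimit formula is $\omega$-compactness of $(X,x)$ in $\Spc_\bullet(\C)$ (automatic because $X$ is representable), not compactness of $\Sigma^\infty_T(X,x)$ as an object of the triangulated category $\SH(\C)$, which is what your first paragraph establishes.
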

\begin{proof}
By \cite[Definition 2.10]{DRO}, an object $F\in\Spc_\bullet(\C)$ is fibrant exactly when for every $X\in\Sm_\C$,
(1) $F(X)$ is a Kan complex; 
(2) the projection $X\times\A^1\to X$ induces a homotopy equivalence $F(X)\simeq F(X\times\A^1)$;
(3) $F$ maps Nisnevich elementary distinguished squares in $\Sm_\C$ to homotopy pullback squares of simplicial sets, 
and $F(\emptyset)$ is contractible.
Moreover, 
a spectrum $E\in\Spt(\C)$ is fibrant if and only if it is levelwise fibrant and an $\Omega_T$-spectrum. 
There are standard simplicial Quillen adjunctions, 
whose left adjoints preserve weak equivalences
\begin{gather*}
\Sigma_T: \Spc_\bullet(\C)\rightleftarrows \Spc_\bullet(\C):\Omega_T\\
\Sigma^\infty_T : \Spc_\bullet(\C)\rightleftarrows \Spt(\C):
\Omega^\infty_T.
\end{gather*}

Let $(E_n)_{n\geq 0}$ be a levelwise fibrant replacement of $\Sigma^\infty_T(X,x)$,
i.e., 
$E_n$ is a fibrant replacement of $\Sigma^n_T(X,x)$ in $\Spc_\bullet(\C)$, 
and let $E$ be a fibrant replacement of $\Sigma^\infty_T(X,x)$ in $\Spt(\C)$.
A key observation is that filtered colimits in $\Spc_\bullet(\C)$ preserve fibrant objects;
this follows from the above description of fibrant objects and the facts that filtered colimits of simplicial sets preserve Kan complexes, 
homotopy equivalences, 
and homotopy pullback squares (\cite[Corollary 2.16]{DRO}). 
This implies there is a simplicial homotopy equivalence
\[
\Omega^\infty_T E\simeq \colim_{n\to\infty} \Omega^n_T E_n.
\]

Let $\tilde X\in\Spc_\bullet(\C)$ be the simplicial presheaf $(X,x)\vee \Delta^1$ pointed at the free endpoint of $\Delta^1$; 
this is a cofibrant replacement of $(X,x)$ in $\Spc_\bullet(\C)$.
Since $\tilde X\in\Spc_\bullet(\C)$ is $\omega$-compact, 
the following are homotopy equivalences of Kan complexes, 
where $\Map$ denotes the simplicial sets of maps in the above simplicial model categories
\begin{align*}
\Map(\Sigma^\infty_T \tilde X, E)
& \simeq
\Map(\tilde X, \Omega^\infty_T E) \\
& \simeq
\Map(\tilde X,\colim_{n\to\infty}\Omega^n_TE_n) \\
& \simeq
\colim_{n\to\infty}\Map(\tilde X,\Omega^n_TE_n) \\
& \simeq
\colim_{n\to\infty}\Map(\Sigma^n_T\tilde X, E_n).
\end{align*}
The hypothesis that $\Sigma^\infty_T(X,x)$ is weakly contractible means that the weak equivalence $\Sigma^\infty_T\tilde X\stackrel\sim\to E$ and the zero
map $\Sigma^\infty_T\tilde X\to *\to E$ are in the same connected component of the Kan complex $\Map(\Sigma^\infty_T\tilde X,E)$.
Since $\pi_0$ preserves filtered colimits of simplicial sets, 
there exists $n\geq 0$ such that the weak equivalence $\Sigma^n_T\tilde X\stackrel\sim\to E_n$ and the zero map $\Sigma^n_T\tilde X\to *\to E_n$ belong to the 
same connected component of $\Map(\Sigma^n_T\tilde X, E_n)$. 
In other words, 
$\Sigma^n_T\tilde X\simeq\Sigma^n_T(X,x)$ is $\A^1$-contractible.
\end{proof}

\begin{cor}\label{cor:Trivial-EC}
Let $X$ be a Koras--Russell threefold of the first or second kind, 
and let $B\C^{\times} \simeq \C\P^{\infty}$ denote the motivic classifying space of $\C^{\times}$ (cf.~\cite[\S~4]{MV}). 
Then the projection map 
\begin{equation}\label{eqn:Triv-EC-0}
\Sigma^{\infty}_T(X \stackrel{\C^{\times}}{\times} E\C^{\times})_{+} 
\to 
\Sigma^{\infty}_T(B\C^{\times})_{+}
\end{equation}
is an isomorphism.
\end{cor}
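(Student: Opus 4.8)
The plan is to leverage the key computation from the proof of Theorem~\ref{thm:Main-KR}, namely that $\Sigma^\infty_T(X,0)\simeq\ast$ in $\SH(\C)$. The base point $0\in X$ splits $\Sigma^\infty_T X_+ \simeq \1 \oplus \Sigma^\infty_T(X,0)\simeq\1$, so the structure map $X\to\Spec(\C)$ induces an isomorphism $\Sigma^\infty_T X_+ \xrightarrow{\simeq} \1 = \Sigma^\infty_T\Spec(\C)_+$. Since $\Sigma^\infty_T$ is monoidal, $\Sigma^\infty_T(X\times U)_+\simeq \Sigma^\infty_T X_+\wedge\Sigma^\infty_T U_+$ for every smooth complex variety $U$, and hence the projection $X\times U\to U$ induces an isomorphism $\Sigma^\infty_T(X\times U)_+ \xrightarrow{\simeq}\Sigma^\infty_T U_+$. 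This triviality of the product is the only geometric input from $X$; the remainder of the argument is a descent statement for the Borel construction.

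First I would fix the standard geometric models $E\C^\times = \colim_N (\A^N\setminus 0)$ and $B\C^\times = \colim_N \P^{N-1}$, with $\C^{\times}=\G_m$ acting on $\A^N\setminus 0$ by scalar multiplication. Because the diagonal action on $X\times(\A^N\setminus 0)$ is free, one has $X\stackrel{\C^{\times}}{\times} E\C^\times = \colim_N\, X\stackrel{\C^{\times}}{\times}(\A^N\setminus 0)$, and the projection \eqref{eqn:Triv-EC-0} is the filtered colimit of the bundle projections $p_N\colon X\stackrel{\C^{\times}}{\times}(\A^N\setminus 0)\to\P^{N-1}$. As $\Sigma^\infty_T$ is a left adjoint it commutes with these filtered homotopy colimits, so it suffices to show that each $p_N$ induces an isomorphism on suspension spectra, compatibly with the transition maps $\P^{N-1}\inj\P^N$.

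Next, working at a fixed finite level $N$, I would use that the $\G_m$-torsor $\A^N\setminus 0 \to \P^{N-1}$ is Zariski-locally trivial. Trivializing over the standard affine cover $\{U_i\}$ of $\P^{N-1}$ yields $\C^{\times}$-equivariant trivializations, hence isomorphisms $p_N^{-1}(U_I)\simeq X\times U_I$ over $U_I$ for every intersection $U_I=\bigcap_{i\in I} U_i$. The functor $\Sigma^\infty_T(-)_+$ carries Nisnevich (in particular Zariski) elementary distinguished squares to homotopy cocartesian squares, equivalently to Mayer--Vietoris cofiber sequences in $\SH(\C)$. Applying this to $\{U_i\}$ simultaneously downstairs on $\P^{N-1}$ and upstairs on $X\stackrel{\C^{\times}}{\times}(\A^N\setminus 0)$, the morphism $p_N$ induces a map of such cofiber sequences; inducting on the number of opens and invoking the five-lemma in the triangulated category $\SH(\C)$ reduces the isomorphism claim to the pieces $p_N^{-1}(U_I)\to U_I$. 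Under the chosen trivializations these are the product projections $X\times U_I\to U_I$, which are isomorphisms on suspension spectra by the first paragraph.

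The main obstacle is the bookkeeping in this descent step: one must verify that the trivializations are compatible enough that, under them, $p_N$ genuinely restricts to the product projections $X\times U_I\to U_I$, so that the comparison is a bona fide map of Mayer--Vietoris cofiber sequences to which the five-lemma applies. Granting this at each finite level, together with its compatibility with the stabilization maps $\P^{N-1}\inj\P^N$, passing to the colimit over $N$ completes the proof.
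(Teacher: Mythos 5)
Your proposal is correct and follows essentially the same route as the paper: write $E\C^\times$ as the colimit of the $\A^N\smallsetminus\{0\}$, use compactness to reduce to each finite level, exploit Zariski-local triviality of the associated bundle over $\C\P^{N-1}$ together with Mayer--Vietoris triangles and induction on an affine open cover, and finally reduce to the product projections $X\times Y\to Y$, which are stable equivalences because $\Sigma^\infty_T X_+\simeq\1$ by Theorem~\ref{thm:Main-KR} and $\Sigma^\infty_T$ is monoidal. The compatibility worry you raise at the end is harmless, since the comparison of Mayer--Vietoris sequences is induced by the bundle projection itself and the trivializations are isomorphisms over the opens.
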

\begin{proof}
By definition, 
$\Sigma^{\infty}_T(X \stackrel{\C^{\times}}{\times} E\C^{\times})_{+}$ is the colimit of the compact objects $\Sigma^{\infty}_T(X \stackrel{\C^{\times}}{\times} U_n)_{+}$ for $n\geq 1$, 
where $U_n=\A^n \smallsetminus \{0\}$ with free $\C^{\times}$-action given by scalar multiplication. 
Hence, it suffices to show there is an isomorphism
$$
\Sigma^{\infty}_T(X \stackrel{\C^{\times}}{\times} U_n)_{+} 
\to
\Sigma^{\infty}_T(\C\P^{n-1})_{+}
$$
for each $n \ge 1$.
Since $U_n \to \C\P^{n-1}$ is Zariski locally trivial, 
in fact a principal $\C^{\times}$-bundle, 
so is the map $X \stackrel{\C^{\times}}{\times} U_n \to \C\P^{n-1}$ with fiber $X$.
Using the Mayer--Vietoris exact triangles
\[
\Sigma^{\infty}_T(U \cap V)_{+} \to \Sigma^{\infty}_T(U)_{+} \oplus
\Sigma^{\infty}_T(V)_{+} \to \Sigma^{\infty}_T(U \cup V)_{+} \to
\Sigma^{\infty}_T(U \cap V)_{+}[1]
\]
and induction on the length of an affine open cover of $\C\P^{n-1}$, 
we are reduced to showing that for any smooth complex affine variety $Y$,
the projection $\Sigma^{\infty}_T(X \times Y)_{+} \to\Sigma^{\infty}_T(Y)_{+}$ is an isomorphism. 
This follows from the isomorphisms $\Sigma^{\infty}_T(X)_{+} \xrightarrow{\simeq}\1$ (cf.~\thmref{thm:Main-KR}), 
and $\Sigma^{\infty}_T(X \times Y)_{+} \simeq \Sigma^{\infty}_T(X)_{+} \wedge\Sigma^{\infty}_T(Y)_{+}$.
\end{proof}

\begin{remk}
	Given a spectrum $E\in\SH(\C)$ and a smooth complex variety $X$ with $\C^\times$-action,
	the Borel $\C^\times$-equivariant $E$-cohomology groups of $X$ can be defined as
	\[
	E^{*,*}_{\C^\times}(X)=E^{*,*}(X \stackrel{\C^{\times}}{\times} E\C^\times).
	\]
	Special cases include the equivariant Chow groups \cite{EdidinGraham} and Borel equivariant $K$-theory \cite{Krishna}.
	If $X$ is a Koras--Russell threefold of the first or second kind, Corollary~\ref{cor:Trivial-EC} shows that the Borel $\C^\times$-equivariant $E$-cohomology of $X$ is trivial, 
        in the sense that the map $X\to\Spec\C$ induces an isomorphism
	\[
	E^{*,*}_{\C^\times}(X)\simeq E^{*,*}_{\C^\times}(\Spec\C).
	\]
	This implies that the other two terms in the Milnor exact sequence (see \cite[Proposition~7.3.2]{Hovey})
	\[
	0 \to {{\underset{n}\varprojlim}}^1 \ E^{a-1, b}(X \stackrel{\C^{\times}}{\times} U_n) \to
	E^{a,b}(X \stackrel{\C^{\times}}{\times} E\C^{\times}) \to {\underset{n}\varprojlim} \ E^{a,b}(X \stackrel{\C^{\times}}{\times} U_n) 
	\to 0
	\]
	are trivial in the same sense (the last term is sometimes taken as the definition of Borel equivariant cohomology).
\end{remk}

\begin{remk}
Koras--Russell threefolds were initially studied over $\C$ and with $\C^{\times}$-actions.
All results in this paper, 
as well as the proofs, 
are valid for algebraically closed fields of characteristic zero and $\G_m$.
\end{remk}

\begin{remk}
Recently, 
Dubouloz and Fasel have announced a proof that Koras-Russell threefolds of the first kind are unstably $\A^1$-contractible \cite{DF}.
\end{remk}

{\bf Acknowledgments.} The authors thank Aravind Asok for very interesting and helpful discussions which motivated this paper.  
We also gratefully acknowledge the hospitality and support during the Motivic Homotopy Theory semester organized by Marc Levine at Universit{\"a}t Duisburg--Essen,
where this work was carried out.
The third author acknowledges support from the RCN project Special Geometries, no.~239015.
The authors also thank the referee for a careful reading of the paper and valuable comments.

\begin{footnotesize}


\begin{thebibliography}{10}

\bibitem{AEH}
S.~S. Abhyankar, W.~Heinzer, and P.~Eakin.
\newblock On the uniqueness of the coefficient ring in a polynomial ring.
\newblock {\em J. Algebra}, 23:310--342, 1972.

\bibitem{Asanuma}
T.~Asanuma.
\newblock Polynomial fibre rings of algebras over {N}oetherian rings.
\newblock {\em Invent. Math.}, 87(1):101--127, 1987.

\bibitem{Asok}
A.~Asok.
\newblock {${\bf A}^1$-contractibility and topological contractibility}.
\newblock {\em "Group actions, generalized cohomology theories, and affine
  algebraic geometry", University of Ottawa, Canada}, 2011.

\bibitem{AF}
A.~Asok and J.~Fasel.
\newblock A cohomological classification of vector bundles on smooth affine
  threefolds.
\newblock {\em Duke Math. J.}, 163(14):2561--2601, 2014.

\bibitem{BH}
H.~Bass and W.~Haboush.
\newblock Some equivariant {$K$}-theory of affine algebraic group actions.
\newblock {\em Comm. Algebra}, 15(1-2):181--217, 1987.

\bibitem{Bell}
J.~P. Bell.
\newblock The equivariant {G}rothendieck groups of the {R}ussell-{K}oras
  threefolds.
\newblock {\em Canad. J. Math.}, 53(1):3--32, 2001.

\bibitem{Bloch}
S.~Bloch.
\newblock Algebraic cycles and higher {$K$}-theory.
\newblock {\em Adv. in Math.}, 61(3):267--304, 1986.

\bibitem{Bloch2}
S.~Bloch.
\newblock The moving lemma for higher {C}how groups.
\newblock {\em J. Algebraic Geom.}, 3(3):537--568, 1994.

\bibitem{Brion-Lum}
M.~Brion.
\newblock {Introduction to actions of algebraic groups}.
\newblock {\em Lecture notes, Luminy, Institute Fourier, France}, 2011.

\bibitem{Deligne}
P.~Deligne.
\newblock Voevodsky's lectures on motivic cohomology 2000/2001.
\newblock In {\em Algebraic topology}, volume~4 of {\em Abel Symp.}, pages
  355--409. Springer, Berlin, 2009.

\bibitem{GD}
G.~Dresden.
\newblock Resultants of cyclotomic polynomials.
\newblock {\em Rocky Mountain J. Math.}, 42(5):1461--1469, 2012.

\bibitem{Dubouloz}
A.~Dubouloz.
\newblock The cylinder over the {K}oras-{R}ussell cubic threefold has a trivial
  {M}akar-{L}imanov invariant.
\newblock {\em Transform. Groups}, 14(3):531--539, 2009.

\bibitem{DF}
A.~Dubouloz and J.~Fasel.
\newblock {Families of {${\bf A}^1$}-contractible affine threefolds}.
\newblock {\em ArXiv e-prints}, December 2015.

\bibitem{DLORV}
B.~I. Dundas, M.~Levine, P.~A. {\O}stv{\ae}r, O.~R{\"o}ndigs, and V.~Voevodsky.
\newblock {\em Motivic homotopy theory}.
\newblock Universitext. Springer-Verlag, Berlin, 2007.
\newblock Lectures from the Summer School held in Nordfjordeid, August 2002.

\bibitem{DRO}
B.~I. Dundas, O.~R{\"o}ndigs, and P.~A. {\O}stv{\ae}r.
\newblock Motivic functors.
\newblock {\em Doc. Math.}, 8:489--525 (electronic), 2003.

\bibitem{EdidinGraham}
D.~Edidin and W.~Graham.
\newblock Equivariant intersection theory.
\newblock {\em Invent. Math.}, 131(3):595--634, 1998.

\bibitem{Fujita}
T.~Fujita.
\newblock On {Z}ariski problem.
\newblock {\em Proc. Japan Acad. Ser. A Math. Sci.}, 55(3):106--110, 1979.

\bibitem{Gupta}
N.~Gupta.
\newblock On the cancellation problem for the affine space {$\Bbb{A}^3$} in
  characteristic {$p$}.
\newblock {\em Invent. Math.}, 195(1):279--288, 2014.

\bibitem{GRSO}
J.~J. Guti{\'e}rrez, O.~R{\"o}ndigs, M.~Spitzweck, and P.~A. {\O}stv{\ae}r.
\newblock Motivic slices and coloured operads.
\newblock {\em J. Topol.}, 5(3):727--755, 2012.

\bibitem{HKO}
J.~Heller, A.~Krishna, and P.~A. {\O}stv{\ae}r.
\newblock Motivic homotopy theory of group scheme actions.
\newblock {\em J. Topol.}, 8(4):1202--1236, 2015.

\bibitem{Herrmann}
P.~{Herrmann}.
\newblock {Equivariant Motivic Homotopy Theory}.
\newblock {\em ArXiv e-prints}, December 2013.

\bibitem{Hovey}
M.~Hovey.
\newblock {\em Model categories}, volume~63 of {\em Mathematical Surveys and
  Monographs}.
\newblock American Mathematical Society, Providence, RI, 1999.

\bibitem{Jack}
S.~Jackowski.
\newblock Equivariant {$K$}-theory and cyclic subgroups.
\newblock In {\em Transformation groups ({P}roc. {C}onf., {U}niv. {N}ewcastle
  upon {T}yne, {N}ewcastle upon {T}yne, 1976)}, pages 76--91. London Math. Soc.
  Lecture Note Series, No. 26. Cambridge Univ. Press, Cambridge, 1977.

\bibitem{KKMLR}
S.~Kaliman, M.~Koras, L.~Makar-Limanov, and P.~Russell.
\newblock {$\bold C^*$}-actions on {$\bold C^3$} are linearizable.
\newblock {\em Electron. Res. Announc. Amer. Math. Soc.}, 3:63--71, 1997.

\bibitem{KML}
S.~Kaliman and L.~Makar-Limanov.
\newblock On the {R}ussell-{K}oras contractible threefolds.
\newblock {\em J. Algebraic Geom.}, 6(2):247--268, 1997.

\bibitem{KR1}
M.~Koras and P.~Russell.
\newblock Contractible threefolds and {${\bf C}^*$}-actions on {${\bf C}^3$}.
\newblock {\em J. Algebraic Geom.}, 6(4):671--695, 1997.

\bibitem{Kraft}
H.~Kraft.
\newblock {\em Geometrische {M}ethoden in der {I}nvariantentheorie}.
\newblock Aspects of Mathematics, D1. Friedr. Vieweg \& Sohn, Braunschweig,
  1984.

\bibitem{Kraft-1}
H.~Kraft.
\newblock Challenging problems on affine {$n$}-space.
\newblock {\em Ast\'erisque}, (237):Exp.\ No.\ 802, 5, 295--317, 1996.
\newblock S{\'e}minaire Bourbaki, Vol. 1994/95.

\bibitem{Krishna}
A.~Krishna.
\newblock {Completion theorem for equivariant $K$-theory}.
\newblock {\em ArXiv e-prints, {\it J. reine angew. Math.}, to appear}, January
  2012.

\bibitem{KM}
N.~M. Kumar and M.~P. Murthy.
\newblock Algebraic cycles and vector bundles over affine three-folds.
\newblock {\em Ann. of Math. (2)}, 116(3):579--591, 1982.

\bibitem{Levine}
M.~Levine.
\newblock Convergence of {V}oevodsky's slice tower.
\newblock {\em Doc. Math.}, 18:907--941, 2013.

\bibitem{Limanov}
L.~Makar-Limanov.
\newblock On the hypersurface {$x+x^2y+z^2+t^3=0$} in {${\bf C}^4$} or a {${\bf
  C}^3$}-like threefold which is not {${\bf C}^3$}.
\newblock {\em Israel J. Math.}, 96(part B):419--429, 1996.

\bibitem{Matsumura}
H.~Matsumura.
\newblock {\em Commutative ring theory}, volume~8 of {\em Cambridge Studies in
  Advanced Mathematics}.
\newblock Cambridge University Press, Cambridge, second edition, 1989.

\bibitem{MS}
M.~Miyanishi and T.~Sugie.
\newblock Affine surfaces containing cylinderlike open sets.
\newblock {\em J. Math. Kyoto Univ.}, 20(1):11--42, 1980.

\bibitem{MV}
F.~Morel and V.~Voevodsky.
\newblock {${\bf A}^1$}-homotopy theory of schemes.
\newblock {\em Inst. Hautes \'Etudes Sci. Publ. Math.}, (90):45--143 (2001),
  1999.

\bibitem{Murthy}
M.~P. Murthy.
\newblock Cancellation problem for projective modules over certain affine
  algebras.
\newblock In {\em Algebra, arithmetic and geometry, {P}art {I}, {II} ({M}umbai,
  2000)}, volume~16 of {\em Tata Inst. Fund. Res. Stud. Math.}, pages 493--507.
  Tata Inst. Fund. Res., Bombay, 2002.

\bibitem{Na}
M.~Nagata.
\newblock A theorem on valuation rings and its applications.
\newblock {\em Nagoya Math. J.}, 29:85--91, 1967.

\bibitem{Pelaez}
P.~Pelaez.
\newblock Multiplicative properties of the slice filtration.
\newblock {\em Ast\'erisque}, (335):xvi+289, 2011.

\bibitem{Riou}
J.~Riou.
\newblock Dualit\'e de {S}panier-{W}hitehead en g\'eom\'etrie alg\'ebrique.
\newblock {\em C. R. Math. Acad. Sci. Paris}, 340(6):431--436, 2005.

\bibitem{RO:mz}
O.~R{\"o}ndigs and P.~A. {\O}stv{\ae}r.
\newblock Modules over motivic cohomology.
\newblock {\em Adv. in Math.}, 219(2):689--727, 2008.

\bibitem{RO}
O.~R{\"o}ndigs and P.~A. {\O}stv{\ae}r.
\newblock Rigidity in motivic homotopy theory.
\newblock {\em Math. Ann.}, 341(3):651--675, 2008.

\bibitem{Spitzweck}
M.~Spitzweck.
\newblock Slices of motivic {L}andweber spectra.
\newblock {\em J. K-Theory}, 9(1):103--117, 2012.

\bibitem{Thomason1}
R.~W. Thomason.
\newblock Algebraic {$K$}-theory of group scheme actions.
\newblock In {\em Algebraic topology and algebraic {$K$}-theory ({P}rinceton,
  {N}.{J}., 1983)}, volume 113 of {\em Ann. of Math. Stud.}, pages 539--563.
  Princeton Univ. Press, Princeton, NJ, 1987.

\bibitem{VV}
G.~Vezzosi and A.~Vistoli.
\newblock Higher algebraic {$K$}-theory for actions of diagonalizable groups.
\newblock {\em Invent. Math.}, 153(1):1--44, 2003.

\bibitem{Voev}
V.~Voevodsky.
\newblock Motivic cohomology groups are isomorphic to higher {C}how groups in
  any characteristic.
\newblock {\em Int. Math. Res. Not.}, (7):351--355, 2002.

\bibitem{VoevOpen}
V.~Voevodsky.
\newblock Open problems in the motivic stable homotopy theory. {I}.
\newblock In {\em Motives, polylogarithms and {H}odge theory, {P}art {I}
  ({I}rvine, {CA}, 1998)}, volume~3 of {\em Int. Press Lect. Ser.}, pages
  3--34. Int. Press, Somerville, MA, 2002.

\bibitem{VoevSlice}
V.~Voevodsky.
\newblock On the zero slice of the sphere spectrum.
\newblock {\em Tr. Mat. Inst. Steklova}, 246(Algebr. Geom. Metody, Svyazi i
  Prilozh.):106--115, 2004.

\bibitem{Zaidenberg}
M.~Za{\u\i}denberg.
\newblock Exotic algebraic structures on affine spaces.
\newblock {\em Algebra i Analiz}, 11(5):3--73, 1999.

\end{thebibliography}



\vspace{0.1in}

\begin{center}
Department of Mathematics, Massachusetts Institute of Technology, Cambridge, MA, USA.\\
e-mail: hoyois@mit.edu
\end{center}
\begin{center}
School of Mathematics, Tata Institute of Fundamental Research, Homi Bhabha Road, Colaba, Mumbai, India.\\
e-mail: amal@math.tifr.res.in
\end{center}
\begin{center}
Department of Mathematics, University of Oslo, Norway.\\
e-mail: paularne@math.uio.no
\end{center}
\end{footnotesize}
\end{document}